\documentclass[11pt,a4paper]{article}
\usepackage[utf8]{inputenc}

\usepackage{amsmath,amsthm,amsfonts,amssymb}
\usepackage{graphicx,url}

\usepackage[left=2.5cm,right=2.5cm,top=3cm,bottom=3cm]{geometry}
\usepackage{setspace}
\onehalfspacing

\renewcommand{\geq}{\geqslant}
\renewcommand{\leq}{\leqslant}
\renewcommand{\ge}{\geqslant}
\renewcommand{\le}{\leqslant}

\def\eref#1{$(\ref{#1})$}

\def\lref#1{Lemma~$\ref{#1}$}
\def\tref#1{Theorem~$\ref{#1}$}

\def\cref#1{Corollary~$\ref{#1}$}

\author{Aidan R. Gentle\\
\small School of Mathematics\\[-0.5ex]
\small Monash University\\[-0.5ex]
\small Vic 3800, Australia\\
\small\tt aidan.gentle@monash.edu}
\title{A polynomial construction of Perfect Sequence Covering Arrays}

\date{}

\newtheorem{theorem}{Theorem}[section]
\newtheorem{lemma}[theorem]{Lemma}

\theoremstyle{definition}

\theoremstyle{definition}

\newcommand{\sym}{\mathcal{S}}
\newcommand{\stab}{\textup{Stab}}
\newcommand{\orb}{\textup{Orb}}
\newcommand{\asc}{\textup{Asc}}
\newcommand{\pgaml}{\textup{P}\Gamma\textup{L}}

\begin{document}
\maketitle

\begin{abstract}
  A PSCA$(v, t, \lambda)$ is a multiset of permutations of the
  $v$-element alphabet $\{0, \dots, v-1\}$ such that every sequence of
  $t$ distinct elements of the alphabet appears in the specified order
  in exactly $\lambda$ permutations. For $v \geq t$,
  let $g(v, t)$ be the smallest positive integer $\lambda$ such
  that a PSCA$(v, t, \lambda)$ exists. We present an explicit construction
  that proves $g(v,t) = O(v^{t(t-2)})$ for fixed $t \ge 4$. The method 
  of construction involves taking a permutation representation of the group 
  of projectivities of a suitable projective space of dimension $t - 2$ and 
  deleting all but a certain number of symbols from each permutation. In the 
  case that this space is a Desarguesian projective plane, we also show that 
  there exists a permutation representation of the group of projectivities of 
  the plane that covers the vast majority of 4-sequences of its points a 
  fixed number of times.
\end{abstract}

\noindent\textbf{Keywords:} sequence covering array, permutation representation, exact covering, projective geometry.\\

\noindent\textbf{Mathematics Subject Classification:} 05B30, 05B40, 20B25, 51E20.

\section{Introduction}\label{s:intro}

For positive integers $v$ and $t$ with $v \geq t$, let $[v] = \{ 0, \dots, v-1 \}$, $\sym_{v}$ be the group of permutations of $[v]$, and $\sym_{v, t}$ be the set of ordered sequences of $t$ distinct elements of $[v]$. Unless stated otherwise, the elements of a sequence $s \in \sym_{v,t}$ are denoted by $(s_{1}, \dots, s_{t})$. For $\pi \in \sym_{v}$ and $s \in \sym_{v, t}$ we say that $\pi$ \emph{covers} $s$ if $\pi^{-1}(s_{i}) < \pi^{-1}(s_{i + 1})$ for $i \in \{1, \dots, t-1\}$. Several aspects of sequence covering have been studied including the problems of finding packings, coverings and perfect coverings of sequences. In this context, a packing is a set of permutations in $\sym_{v}$ such that every sequence in $\sym_{v,t}$ is covered by at most one permutation. In coding theory, these sets are referred to as \textit{ $(v-t)$-deletion correcting codes} \cite{Kle04, Lev91}. A covering of sequences is a set of permutations in $\sym_{v}$ such that every sequence in $\sym_{v,t}$ is covered by at least one permutation. These sets are referred to as \textit{sequence covering arrays} and were first studied by Spencer \cite{Spen71} as an extension of a problem studied by Dushnik \cite{Dush50} relating to the dimension of certain partial orders. More recently, sequence covering arrays have been investigated for their applications in event sequence testing \cite{Kuhn12}. 

In this paper, we are concerned with the problem of perfect coverings of sequences. A \emph{perfect sequence covering array} with order $v$, strength $t$ and multiplicity $\lambda$, denoted by PSCA$(v, t, \lambda)$, is a multiset $X$ of permutations in $\sym_{v}$ such that every sequence in $\sym_{v, t}$ is covered by exactly $\lambda$ permutations in $X$. If $T$ is a $t$-subset of $[v]$, then there are $t!$ ways of arranging the elements of $T$, each of which forms a sequence in $\sym_{v,t}$ that must be covered by $\lambda$ permutations in a PSCA$(v, t, \lambda)$. Furthermore, every permutation in a PSCA$(v, t, \lambda)$ covers exactly one of these sequences, so a PSCA$(v, t, \lambda)$ must contain $t!\lambda$ permutations.

For $v \geq t$, let $g(v, t)$ be the smallest positive integer $\lambda$ such that a PSCA$(v, t, \lambda)$ exists. Observe that $\sym_{v}$ is a PSCA$(v, t, v!/t!)$, so $g(v, t)$ is well defined and $g(v, t) \leq v!/t!$. Much of the research into perfect sequence covering arrays has focussed on determining or bounding $g(v,t)$. If $v > t$ and we remove the symbol $v-1$ from every permutation of a PSCA$(v, t, \lambda)$, then we obtain a PSCA$(v-1, t, \lambda)$. Hence, $g(v, t) \geq g(v-1, t)$. For $2 \leq t' \leq t$, a PSCA$(v, t, \lambda)$ is also a PSCA$(v, t', \lambda \binom{t}{t'})$, so $g(v, t') \leq \binom{t}{t'}g(v, t)$.

In this paper, we present an explicit construction of a PSCA$(v,t,\lambda)$ for all $v \geqslant t \geqslant 4$. The method of this construction involves taking a suitable permutation representation of the group $\textup{PGL}(t-1,q)$ of projectivities of the projective space $\textup{PG}(t-2,q)$. We show that in such a permutation representation, there is a subset of $q+1$ symbols such that any $t$-sequence of symbols from this subset is covered by $\lambda$ permutations for a given constant $\lambda$. Hence, deleting all but the symbols in this subset forms a PSCA$(q+1,t,\lambda)$. This construction yields the following upper bound on $g(v,t)$.
\begin{theorem}\label{t:upperbound}
For $v \ge t \ge 4$,
\begin{equation*}
g(v, t) <  \frac{(2v)^{(t-1)^{2}}}{t!(2v - 1)}.
\end{equation*}
\end{theorem}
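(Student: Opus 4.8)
The plan is to follow the group-theoretic recipe announced in the introduction. Given $v \ge t \ge 4$, I would first invoke Bertrand's postulate to fix a prime (hence prime power) $q$ with $v - 1 \le q < 2v$, and then work in $\Pi = \textup{PG}(t-2, q)$ with its group of projectivities $G = \textup{PGL}(t-1, q)$. Inside $\Pi$ choose the normal rational curve $C$ consisting of the points $[1 : \theta : \theta^{2} : \dots : \theta^{t-2}]$ for $\theta \in \mathbb{F}_{q}$ together with $[0 : \dots : 0 : 1]$, so $|C| = q + 1 \ge v$. A Vandermonde-determinant argument shows $C$ is a $(q+1)$-arc: any $t - 1$ of its points form a basis of the underlying vector space, so any $t$ of them are in general position and therefore (after suitably rescaling representatives) form a projective frame of $\Pi$. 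Fix a linear order $\prec$ on the whole point set of $\Pi$. For $\sigma \in G$ let $w_{\sigma}$ be the permutation of the $(q+1)$-letter alphabet $C$ in which $c$ precedes $c'$ iff $\sigma^{-1}(c) \prec \sigma^{-1}(c')$ — that is, the restriction to $C$ of the linear order $\sigma(\prec)$ — and set $X = \{\, w_{\sigma} : \sigma \in G \,\}$ as a multiset.

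The heart of the argument is to show $X$ is a PSCA$(q+1, t, |G|/t!)$. Take any $t$-sequence $s = (s_{1}, \dots, s_{t})$ of points of $C$; by the arc property it is an ordered frame. Now $w_{\sigma}$ covers $s$ exactly when $\sigma^{-1}(s_{1}) \prec \dots \prec \sigma^{-1}(s_{t})$, so, writing $\tau = \sigma^{-1}$, the covering multiplicity of $s$ equals $\#\{\tau \in G : \tau(s_{1}) \prec \dots \prec \tau(s_{t})\}$. Because $G$ is \emph{sharply} transitive on ordered frames of $\Pi$, the map $\tau \mapsto (\tau(s_{1}), \dots, \tau(s_{t}))$ is a bijection from $G$ onto the set of all ordered frames, under which the condition $\tau(s_{1}) \prec \dots \prec \tau(s_{t})$ picks out exactly those orderings of frames that are $\prec$-increasing. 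Each unordered frame has a unique $\prec$-increasing ordering, so this count equals the number of unordered frames of $\Pi$, namely $|G|/t!$ — crucially independent of $s$. This gives the PSCA.

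Finally I would cash this in numerically. Deleting a symbol from a PSCA$(v', t, \lambda)$ gives a PSCA$(v'-1, t, \lambda)$ (as noted in \sref{s:intro}), so passing from $C$ to any $v$-subset yields $g(v, t) \le |G|/t! = |\textup{PGL}(t-1, q)|/t!$. Then $|\textup{PGL}(t-1,q)| = \frac{1}{q-1}\,q^{\binom{t-1}{2}}\prod_{i=1}^{t-1}(q^{i}-1) < \frac{q^{(t-1)^{2}}}{q-1}$, using $\prod_{i=1}^{t-1}(q^{i}-1) < q^{t(t-1)/2}$ and $\binom{t-1}{2} + \tfrac{t(t-1)}{2} = (t-1)^{2}$. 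Since $x \mapsto x^{(t-1)^{2}}/(x-1)$ is increasing for $x \ge 2$ and $q < 2v$, this gives $g(v,t) < \frac{q^{(t-1)^{2}}}{t!\,(q-1)} < \frac{(2v)^{(t-1)^{2}}}{t!\,(2v-1)}$, the desired bound.

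The main obstacle is the second paragraph — and it is essentially conceptual rather than computational. One has to recognise that the correct object to restrict is a global linear order on all of $\Pi$ (not on $C$), so that after applying $\tau$ the points $\tau(s_{i})$, which may lie anywhere in $\Pi$, are still comparable, and one has to pair this with the ``unique increasing arrangement'' device so that the multiplicity is visibly the same for every $s$. The two ingredients it rests on — that normal rational curves are arcs (Vandermonde) and that $\textup{PGL}(t-1,q)$ is sharply transitive on frames — are classical; the remaining estimates, and the existence of a prime power in $[v-1, 2v)$, are routine. A small case to double-check is $q + 1 = t$ (i.e.\ $v = t$), where $C$ is a single frame and no deletion is needed; the argument goes through unchanged.
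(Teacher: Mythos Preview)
Your proposal is correct and follows essentially the same approach as the paper: the paper likewise builds a PSCA$(q+1,t,\vert\textup{PGL}(t-1,q)\vert/t!)$ by restricting a permutation representation of $\textup{PGL}(t-1,q)$ to a $(q+1)$-arc in $\textup{PG}(t-2,q)$, proves the constant covering multiplicity for frames via the same sharp-transitivity count (packaged there as $\vert\asc(s)\vert\vert\stab(s)\vert = \vert\Psi\vert/(n+2)!$), and then bounds $\vert\textup{PGL}(t-1,q)\vert$ using $q < 2v$. The only cosmetic differences are that you pick a prime $q$ via Bertrand's postulate where the paper takes the least power of $2$ at least $v$, and that you name the normal rational curve explicitly where the paper simply cites the existence of a $(q+1)$-arc.
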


This is the first upper bound on $g(v,t)$ for $t \geqslant 4$ that is polynomial in $v$ and does not use probabilistic arguments. However, a probabilistic upper bound does exist. A \textit{$t$-wise uniform set of permutations} is a set $T \subseteq \sym_{v}$ such that for any $u,v \in \sym_{v,t}$,
\begin{equation*}
\frac{1}{\vert T \vert} \vert \{ \pi \in T : \pi(u_{i}) = v_{i}, 1\le i \le t\} \vert = \frac{t!}{n!}.
\end{equation*}
A $t$-wise uniform set of permutations $T \subseteq \sym_{v}$ is also a PSCA$(v,t,\vert T \vert/t!)$. Kuperberg, Lovett and Peled \cite{Kup17} proved that for any $t \le v$, there is a $t$-wise uniform set of permutations $T \subseteq \sym_{v}$ with $\vert T \vert \le (cv)^{ct}$ for some universal constant $c > 0$. Although this result gives a tighter bound on $g(v,t)$ than \tref{t:upperbound}, it is not yet known how to efficiently construct either a PSCA or a $t$-wise uniform set of permutations with this size. The best known construction of a $t$-wise uniform set of permutations is due to Finucane, Peled and Yaari \cite{Fin13} who build such a set $T \subseteq \sym_{v}$ with $\vert T \vert = t^{2v}$ when $v = 2^{m}$ and $t = 2^{\ell} - 1$ for some $m \geq 1$ and $\ell \geq 2$.

Our construction can also be applied when $t = 3$. However, an infinite family of PSCA$(v,3,\lambda)$ built by Yuster \cite{Yus19} established that $g(v,3) \leq cv(\log v)^{\log 7}$ for an absolute constant $c$. This result provides a tighter bound on $g(v,3)$ than \tref{t:upperbound} would, were it to be extended to the $t=3$ case.

In proving \tref{t:upperbound}, we show that sequences of $t$ points of $\textup{PG}(t-2,q)$ belonging to a particular family are covered by a constant number of permutations in a representation of $\textup{PGL}(t-1,q)$. In the $t=4$ case, we can choose a particular representation of $\textup{PGL}(3,q)$ to ensure that sequences of four points of $\textup{PG}(2,q)$ belonging to a separate family are also covered by the same constant number of permutations. Although accounting for this new family of sequences does not provide a substantial improvement to the bound $g(v,4) = O(v^{8})$ implied by \tref{t:upperbound}, it does prove the following theorem.

\begin{theorem}\label{t:collineationgroup}
Let $q$ be a prime power and let $r = q^{2} + q + 1$. Then there is a permutation representation $\Psi \leq \sym_{r}$ of $\textup{PGL}(3,q)$ such that the number of sequences in $\sym_{r,4}$ that are covered by exactly $\vert \Psi \vert/4!$ permutations in $G$ is greater than
\begin{equation*}
\left( 1 - \frac{1}{q} \right) \vert \sym_{r,4} \vert.
\end{equation*}
\end{theorem}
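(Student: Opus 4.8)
The plan is to fix once and for all an ordering of the $r$ points of $\textup{PG}(2,q)$, let $\Psi\le\sym_{r}$ be the image of $\textup{PGL}(3,q)$ under the labelling this ordering provides (so $|\Psi| = |\textup{PGL}(3,q)| = q^{3}(q^{3}-1)(q^{2}-1)=:h$, the action on points being faithful), and then count, one $\textup{PGL}(3,q)$-orbit at a time, how many elements of $\Psi$ cover a given $4$-sequence. The number of permutations in $\Psi$ that cover $s\in\sym_{r,4}$ depends only on the $\textup{PGL}(3,q)$-orbit of $s$ regarded as an ordered $4$-tuple of points, since translating $s$ by a group element right-translates its set of covering permutations. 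So the problem splits into three families: ordered $4$-tuples of points in general position, those in which exactly three of the four points are collinear, and those in which all four are collinear.

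The two extreme families require nothing beyond elementary counting. By the fundamental theorem of projective geometry $\textup{PGL}(3,q)$ is sharply transitive on ordered frames, so the ordered $4$-tuples in general position form one orbit of size $h$; this orbit is closed under permuting the four coordinates, so exactly one of the $24$ orderings of each such $4$-set is increasing, whence every $4$-sequence of this type is covered exactly $h/24 = |\Psi|/24$ times, \emph{for any choice of ordering}. On the other hand, the $4$-sequences whose points are collinear number only $24\,r\binom{q+1}{4} = rq(q+1)(q-1)(q-2)$, which is strictly less than $\tfrac1q|\sym_{r,4}| = r(q+1)(q^{2}+q-1)(q^{2}+q-2)$; so it suffices to arrange that every $4$-sequence with exactly three collinear points is also covered $|\Psi|/24$ times. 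Such a sequence lies in one of four orbits $O_{1},\dots,O_{4}$, according to the position $j$ of the point off the line of the other three; the stabiliser of such a tuple is the group of homologies with that axis and centre, of order $q-1$, so $|O_{j}| = h/(q-1)$, and counting the increasing tuples inside $O_{j}$ shows that every $4$-sequence of $O_{j}$ is covered exactly $(q-1)M_{j}$ times, where $M_{j}$ denotes the number of $4$-subsets $\{A,B,C,D\}$ with $A,B,C$ collinear, $D$ off their line, and $D$ the $j$-th smallest in the chosen ordering. Since $M_{1}+M_{2}+M_{3}+M_{4} = N_{3} := r\binom{q+1}{3}q^{2}$ and $h/\bigl(24(q-1)\bigr) = N_{3}/4$, the whole middle family is covered $|\Psi|/24$ times exactly when the ordering is chosen so that $M_{1} = M_{2} = M_{3} = M_{4}$.

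It remains to exhibit such an ordering, and I expect this to be the main obstacle. Grouping the defining count of $M_{j}$ by lines gives $M_{j} = \sum_{\ell}\sum_{k=0}^{q+1} n_{\ell}(k)\binom{k}{j-1}\binom{q+1-k}{4-j}$, where $n_{\ell}(k)$ is the number of points off $\ell$ with exactly $k$ points of $\ell$ below them. So everything hinges on making the aggregate profile $N(k) = \sum_{\ell} n_{\ell}(k)$ satisfy the identity ``$\sum_{k} N(k)\binom{k}{j-1}\binom{q+1-k}{4-j}$ is independent of $j\in\{1,\dots,4\}$''. A profile symmetric about $k = (q+1)/2$ already forces $M_{1} = M_{4}$ and $M_{2} = M_{3}$, so half of the work would be a symmetry statement; the remaining equality $M_{1} = M_{2}$, equivalent to $\sum_{k} N(k)\binom{q+1-k}{2}(q-1-4k) = 0$, is the delicate part, since it constrains how collinear triples are distributed among the initial segments of the ordering rather than merely how many there are.

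To produce an ordering for which all of this holds I would work with a Singer labelling: identify the points with $\mathbb{Z}/r$ so that the lines are precisely the translates of a planar difference set $\mathcal{D}$ of size $q+1$. Then $\Psi$ contains an $r$-cycle (the Singer cycle) and, via the multiplier $q$ of $\mathcal{D}$, an element of order $3$; translation-invariance of the line set lets one rewrite each $M_{j}$ as a sum over $c\in\mathbb{Z}/r\setminus\mathcal{D}$ of an order statistic of the residue set $\{(d-c)\bmod r : d\in\mathcal{D}\}$ (for instance $M_{4} = \sum_{c}\sum_{S\in\binom{\mathcal{D}}{3}}\min_{d\in S}(d-c)$ and $M_{1} = \sum_{c}\sum_{S}\bigl(r-\max_{d\in S}(d-c)\bigr)$, reductions mod $r$ understood), whereupon $M_{1}=M_{2}=M_{3}=M_{4}$ becomes a family of identities about these order statistics that can in principle be checked from the combinatorics of $\mathcal{D}$, with the order-$3$ multiplier cutting down the case analysis. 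This last verification is where the arithmetic of $\textup{PG}(2,q)$ must be used in full, and it is conceivable that the most naive Singer labelling has to be adjusted (by choosing a different representative of $\mathcal{D}$, say) before the balance is exact. Once $M_{1}=M_{2}=M_{3}=M_{4}$ is in hand, the two reductions above show that every $4$-sequence except possibly the $24\,r\binom{q+1}{4}$ built from collinear points is covered exactly $|\Psi|/24$ times, which is more than the required $(1-1/q)|\sym_{r,4}|$.
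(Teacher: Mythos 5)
Your setup is sound and follows essentially the same road map as the paper: split $\sym_{r,4}$ into frames, tuples with exactly three collinear points, and fully collinear tuples; handle frames by sharp transitivity; identify the four orbits $O_{1},\dots,O_{4}$ by the position of the off-line point, with stabiliser the homology group of order $q-1$; reduce the whole middle family to the single condition $M_{1}=M_{2}=M_{3}=M_{4}$ (your identification of $M_{j}$ with the number of increasing representatives, and the check that the common value must be $N_{3}/4$, are both correct); and propose a Singer difference-set labelling. Your closing count, which bounds the exceptional set by the $r(q+1)q(q-1)(q-2)$ fully collinear sequences rather than adding up the good orbits, is fine and in fact slightly cleaner than the paper's.

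The genuine gap is that the verification $M_{1}=M_{2}=M_{3}=M_{4}$ for the Singer labelling --- which is the entire content of the theorem beyond routine orbit counting --- is left as something that ``can in principle be checked,'' with an explicit hedge that the labelling might need adjusting. Moreover, the route you sketch is harder than what is actually needed and it is not clear it closes: the reflection symmetry of the profile $N(k)$ gives you $M_{1}=M_{4}$ and $M_{2}=M_{3}$, but your remaining identity $\sum_{k}N(k)\binom{q+1-k}{2}(q-1-4k)=0$ does not follow from that symmetry, and attacking it directly through order statistics of the difference set is the hard part you have not done. The missing idea in the paper is a \emph{labelling-independent} relation: for every bijection $\psi$ one has $M_{1}+M_{4}=M_{2}+M_{3}=N_{3}/2$, proved by evaluating $\sum_{\ell\in L}\sum_{i}i\,\ell_{i}$ exactly (each $j\in[r]$ contributes $j^{2}$, since the $j$ smaller points are each collinear with $j$ on exactly one line), together with $\sum_{\ell}\sum_{i}\ell_{i}=\binom{r}{2}(q+1)$. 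Once this universal identity is in hand, the difference-set symmetry $\sum_{\ell}(q^{2}+q-\ell_{q-i})=\sum_{\ell}\ell_{i}$ (a short computation with consecutive gaps $a_{k+1}-a_{k}$) forces all four $M_{j}$ to equal $N_{3}/4$, and no adjustment of the Singer labelling is required. Without either that identity or a completed direct proof of $M_{1}=M_{2}$, your argument does not yet establish the theorem.
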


In building a PSCA$(v,4,\lambda)$, we take a prime power $q$ such that $q \geqslant v$, find a suitable permutation representation of $\textup{PGL}(3,q)$ in $\sym_{q^{2} + q + 1}$ and then delete all but $v$ symbols from each permutation in this group. As a consequence of this symbol deletion, the number of permutations in the resulting PSCA on $v$ symbols is approximately $v^{8}$. \tref{t:collineationgroup} implies that it is possible to find a set of permutations in $\sym_{v}$ with size approximately $v^{4}$ such that the vast majority of 4-sequences are covered by a constant number of permutations. This reduced size is much closer to the lower bound proved by Yuster \cite{Yus19}, which says that $g(v,4) \geq v(v-3)/48$. However, as the permutation representation presented in \tref{t:collineationgroup} may or may not be a PSCA, it is still unclear what the asymptotic behaviour of $g(v,4)$ should be.

In addition to the asymptotic results cited above, research in this area has uncovered exact values of $g(v,t)$ \cite{GeWa22, Lev91, Math99, Njl22, Yus19} as well as the non-existence of a PSCA$(v,t,\lambda)$ \cite{Chee13, GeWa22, Kle04, Math99} for certain choices of $v$, $t$ and $\lambda$.

The paper is organised as follows. In Section 2, we introduce notation and some basic ideas that lay the groundwork for the constructions in the subsequent sections. In Section 3, we prove \tref{t:upperbound}. In Section 4, we prove \tref{t:collineationgroup}.

\section{Preliminaries}\label{s:generous}

We begin by recalling some definitions regarding group actions. For a set $X$, let $\text{Sym}(X)$ denote the group of permutations of $X$. Note that when $X = [v]$, $\text{Sym}(X) = \sym_{v}$. An \emph{action} of a group $G$ on $X$ is a homomorphism $\phi: G \rightarrow \text{Sym}(X)$. For $g \in G$ and $x \in X$, we use $gx$ to refer to the image of $x$ under the permutation $\phi(g)$. The \textit{orbit} of $x$ is the set $\orb(x) := \{ gx : g \in G \}$. The \textit{stabiliser} of $x$ is the set $\stab(x) := \{ g \in G : gx = x \}$. The stabiliser of $x$ forms a subgroup of $G$. In what follows, we make use of the Orbit-Stabiliser Theorem.

\begin{theorem}
If $G$ is a group acting on $X$, then for any $x \in X$, 
\begin{equation*}
\vert \orb(x) \vert \vert \stab(x) \vert = \vert G \vert.
\end{equation*}
\end{theorem}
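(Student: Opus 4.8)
The plan is to set up an explicit bijection between $\orb(x)$ and the set of left cosets of $\stab(x)$ in $G$, and then to conclude via Lagrange's theorem. First I would define the map $\varphi \colon G \to \orb(x)$ by $\varphi(g) = gx$, which is surjective directly from the definition $\orb(x) = \{gx : g \in G\}$. The crucial observation is that for $g, h \in G$ we have $gx = hx$ if and only if $h^{-1}gx = x$, that is, if and only if $h^{-1}g \in \stab(x)$, which in turn holds exactly when $g$ and $h$ belong to the same left coset $h\,\stab(x)$. Thus the fibres of $\varphi$ are precisely the left cosets of $\stab(x)$, and $\varphi$ descends to a well-defined bijection from the set of these cosets onto $\orb(x)$. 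In particular $\vert \orb(x) \vert$ equals the index $[G : \stab(x)]$.

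To finish, I would invoke the fact that $\stab(x)$ is a subgroup of $G$ (already noted in the text) together with Lagrange's theorem: the left cosets of $\stab(x)$ partition $G$ into $[G : \stab(x)]$ blocks, each of size $\vert \stab(x) \vert$, so that $\vert G \vert = [G : \stab(x)]\,\vert \stab(x) \vert = \vert \orb(x) \vert\,\vert \stab(x) \vert$, as required. One could equally make the cardinality count self-contained by observing that, for any $h \in G$, the map $s \mapsto hs$ is a bijection from $\stab(x)$ to the coset $h\,\stab(x)$, so all cosets have the common size $\vert \stab(x) \vert$.

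There is no real obstacle here, since this is the classical Orbit--Stabiliser Theorem. The only points meriting a line of care are checking that the relation ``$h^{-1}g \in \stab(x)$'' is an equivalence relation compatible with the coset structure — which reduces to $\stab(x)$ being closed under products and inverses, so that $g^{-1}h = (h^{-1}g)^{-1} \in \stab(x)$ whenever $h^{-1}g \in \stab(x)$ — and ensuring the map $\varphi$ genuinely factors through the coset space before declaring it a bijection.
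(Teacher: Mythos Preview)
Your argument is the standard, correct proof of the Orbit--Stabiliser Theorem. The paper itself does not supply a proof: it merely states the result as a well-known fact and moves on, so there is nothing to compare against beyond noting that your write-up is more detailed than what the paper provides.
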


A permutation group $G \le \sym_{v}$ has the following natural action on $\sym_{v,t}$. If $g \in G$ and $s \in \sym_{v,t}$, then $gs = (g(s_{1}), \dots, g(s_{t}))$. Consider an array $\textsf{A}$ with columns indexed by $[v]$ and rows indexed by the elements of $G$ where $\textsf{A}[g,i] = g(i)$. Let $s \in \sym_{v,t}$ and consider the corresponding sequence of columns of \textsf{A}. In row $g$ and in columns $(s_{1}, \dots, s_{t})$ of \textsf{A}, we find the sequence $(g(s_{1}), \dots, g(s_{t})) = gs$. So the sequences that appear in the columns $(s_{1}, \dots, s_{t})$ of \textsf{A} are exactly those in $\orb(s)$. For $x \in \orb(s)$, the set of rows of \textsf{A} in which the sequence $x$ appears in the columns $(s_{1}, \dots, s_{t})$ is $\{ g : gs = x \}$. This set is a coset of $\stab(s)$ so it must have the same size as $\stab(s)$. The permutation $g$ covers $gs$ if and only if $s_{1} < \dots < s_{t}$. Define $\asc(s) := \{ x \in \orb(s) : x_{1} < \dots < x_{t} \}$. We now have the following lemma.
\begin{lemma}\label{l:ascstab}
If $G \le \sym_{v}$ is a permutation group and $s \in \sym_{v, t}$, then the number of permutations in $G$ that cover $s$ is $\vert \asc(s) \vert \vert \stab(s) \vert$.

\end{lemma}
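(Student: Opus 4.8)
The plan is to reduce the counting problem to the coset decomposition already described in the paragraph preceding the lemma. First I would unwind the definition of ``covers'': by definition $g$ covers $s$ precisely when $g^{-1}(s_{1}) < \dots < g^{-1}(s_{t})$, which is exactly the statement that the sequence $g^{-1}s = (g^{-1}(s_{1}), \dots, g^{-1}(s_{t}))$ is ascending. Since $G$ is a group, $g^{-1} \in G$, so $g^{-1}s \in \orb(s)$, and hence $g$ covers $s$ if and only if $g^{-1}s \in \asc(s)$. Thus the number of permutations in $G$ that cover $s$ equals $\vert \{ g \in G : g^{-1}s \in \asc(s) \} \vert$.

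Next I would apply the bijection $g \mapsto g^{-1}$ on $G$, which shows this quantity equals $\vert \{ g \in G : gs \in \asc(s) \} \vert$. Now I would decompose this set according to the value of $gs$: it is the disjoint union over $x \in \asc(s)$ of the sets $\{ g \in G : gs = x \}$. As noted in the discussion of the array $\textsf{A}$, each such set is a coset of $\stab(s)$ and therefore has cardinality $\vert \stab(s) \vert$ (it is nonempty precisely because $x \in \orb(s)$). Since there are $\vert \asc(s) \vert$ terms in the union, summing gives $\vert \asc(s) \vert \, \vert \stab(s) \vert$, which is the claimed count.

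There is no real obstacle here; the proof is a short bookkeeping argument. The only point requiring a little care is the appearance of the inverse in the definition of ``covers'': one must pass from $g$ to $g^{-1}$ (legitimate because $G$ is closed under inversion) before invoking the orbit–stabiliser coset structure, since that structure is phrased in terms of the action $g \mapsto gs$ rather than $g \mapsto g^{-1}s$. Everything else follows immediately from the Orbit–Stabiliser Theorem and the partition of $G$ into cosets of $\stab(s)$.
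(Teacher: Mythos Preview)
Your proposal is correct and is essentially a careful formalisation of the paper's own argument, which is given informally in the paragraph immediately preceding the lemma rather than in a separate proof block. The paper's key observation that ``$g$ covers $gs$ if and only if $s_{1} < \dots < s_{t}$'' is exactly your statement that $g$ covers $s$ if and only if $g^{-1}s$ is ascending (substitute $s \to g^{-1}s$), and the coset count you invoke is precisely the one the paper spells out with the array~$\textsf{A}$; your explicit use of the bijection $g \mapsto g^{-1}$ makes the passage between the two phrasings clear.
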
 

To conclude this section, we consider deleting symbols from permutations. For $\pi \in \sym_{v}$ and $j \in [v]$, we define $\pi_{[j]}$ to be the permutation in $\sym_{j}$ obtained by deleting the symbols $\{j, j+1, \dots, v-1\}$ from $\pi$. The permutation $\pi_{[j]}$ covers a sequence $s \in \sym_{j, t}$ if and only if $\pi$ also covers $s$. If $X$ is a multiset of permutations in $\sym_{v}$, then for $j \in [v]$, we define $X_{[j]}$ to be the multiset $\{ \pi_{[j]} : \pi \in X\}$. Then, for any sequence $s \in \sym_{j,t}$, the number of permutations in $X_{[j]}$ that cover $s$ is equal to the number of permutations in $X$ that cover $s$. In the next section we construct a PSCA of strength $t$ by deleting symbols from a suitable multiset of permutations.

\section{Collineations of projective spaces} \label{s:collineation}

In this section we prove \tref{t:upperbound}. We begin by introducing some definitions regarding projective spaces. Let $q = p^{m}$ for some prime $p$ and for some integer $m \geq 1$ and let $\textrm{GF}(q)$ be the field with $q$ elements. Now let $n \geq 2$ and let $V$ be an $(n+1)$-dimensional vector space over $\textrm{GF}(q)$. Then the \textit{$n$-dimensional projective space} over $\textrm{GF}(q)$, denoted by $\textrm{PG}(n,q)$ is the set of all 1-dimensional subspaces of $V$. The elements of $\textrm{PG}(n,q)$ are called \textit{points}. If $W$ is a subspace of $V$, then $W$ forms a set of points in $\textrm{PG}(n,q)$ with $W$ containing the point $X$ if $X$ is a subspace of $W$. A 2-dimensional subspace of $V$ forms a \textit{line} in $\textrm{PG}(n,q)$ and an $n$-dimensional subspace of $V$ forms a \textit{hyperplane} in $\textrm{PG}(n,q)$. A \textit{collineation} of $\textrm{PG}(n,q)$ is a permutation of the points of $\textrm{PG}(n,q)$ that maps lines to lines. Let $A \in \textrm{GL}(n+1,q)$ be a non-singular matrix and suppose that $Au = v$ for vectors $u \in X$ and $v \in Y$ where $X$ and $Y$ are points in $\textrm{PG}(n,q)$. Then $A(cu) = cv$ for $c \in \textrm{GF}(q)$. Thus, every vector in $X$ is mapped by $A$ to a vector in $Y$. Hence, $A$ induces a permutation of the points of $\textrm{PG}(n,q)$. Permutations formed in this way are called \textit{projectivities}. The set of all projectivities of $\textrm{PG}(n,q)$ forms the group $\textrm{PGL}(n+1,q)$. A \textit{frame} of $\textrm{PG}(n,q)$ is an ordered sequence of $n + 2$ points in $\textrm{PG}(n,q)$ such that no $n+1$ of these points lie in the same hyperplane of $\textrm{PG}(n,q)$. The following theorem is a statement of the Fundamental Theorem of Projective Geometry (see e.g. \cite{Hir79}).

\begin{theorem}\label{t:fundamental}
For any two frames in $\textup{PG}(n,q)$, there is a unique projectivity of $\textup{PG}(n,q)$ mapping one frame to the other.
\end{theorem}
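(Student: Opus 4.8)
The plan is to fix one reference frame and show that every frame is equivalent to it under $\textup{PGL}(n+1,q)$, with trivial stabiliser. Choose a basis $e_1,\dots,e_{n+1}$ of $V$ and set $F_0 = (\langle e_1\rangle,\dots,\langle e_{n+1}\rangle,\langle e_1 + \dots + e_{n+1}\rangle)$. I would first check that $F_0$ is a frame: any $n+1$ of the $n+2$ vectors appearing here are linearly independent and hence span $V$, so no $n+1$ of the corresponding points lie in a common hyperplane. The theorem then reduces to two claims: (i) for every frame $F$ there is a projectivity sending $F_0$ to $F$; and (ii) the only projectivity fixing every point of $F_0$ is the identity. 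Granting these, existence follows by composing a projectivity $F_0 \to F_1$ with one $F_0 \to F_2$; and uniqueness follows because if $\sigma,\tau$ both send a frame $F_1$ to $F_2$ and $\rho$ sends $F_0$ to $F_1$ (using (i)), then $\rho^{-1}\sigma^{-1}\tau\rho$ fixes $F_0$ pointwise, so it is the identity by (ii), whence $\sigma = \tau$.

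For (i), given a frame $(P_1,\dots,P_{n+1},P_{n+2})$ I would pick representative vectors $v_i \in P_i$ for $1 \le i \le n+1$ and $w \in P_{n+2}$. The frame condition applied to $P_1,\dots,P_{n+1}$ says these points do not lie in a common hyperplane, so $v_1,\dots,v_{n+1}$ is a basis of $V$; write $w = \sum_{i=1}^{n+1} c_i v_i$. The crucial point is that each $c_i \ne 0$: if $c_j = 0$, then $w$ lies in the hyperplane spanned by $\{v_i : i \ne j\}$, so the $n+1$ points $\{P_i : i \ne j\}\cup\{P_{n+2}\}$ all lie in a common hyperplane, contradicting the frame condition. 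Rescaling $v_i \mapsto c_i v_i$ (still a basis, still a representative of $P_i$) lets us assume $w = v_1 + \dots + v_{n+1}$. Then the invertible matrix $A$ with $A e_i = v_i$ induces a projectivity carrying $\langle e_i\rangle$ to $P_i$ and $\langle e_1 + \dots + e_{n+1}\rangle$ to $\langle v_1 + \dots + v_{n+1}\rangle = P_{n+2}$, i.e. $F_0$ to the given frame.

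For (ii), suppose $A \in \textup{GL}(n+1,q)$ induces a projectivity fixing every point of $F_0$. Then $A e_i = \lambda_i e_i$ with $\lambda_i \ne 0$, and $A(e_1 + \dots + e_{n+1}) = \lambda_1 e_1 + \dots + \lambda_{n+1} e_{n+1}$ must be a scalar multiple of $e_1 + \dots + e_{n+1}$; comparing coordinates forces $\lambda_1 = \dots = \lambda_{n+1}$, so $A$ is a scalar matrix and induces the identity projectivity. The main — and essentially the only — obstacle is the nonvanishing of the coordinates $c_i$ in step (i); this is exactly the point at which the full strength of the frame condition is needed, rather than merely the independence of $P_1,\dots,P_{n+1}$. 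Everything else is routine linear algebra over $\textup{GF}(q)$.
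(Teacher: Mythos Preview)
Your argument is correct and is essentially the standard proof of the Fundamental Theorem of Projective Geometry. Note, however, that the paper does not actually prove this statement: it is quoted as a known result with a reference to Hirschfeld's \emph{Projective Geometries over Finite Fields}, so there is no in-paper proof to compare against. What you have written is precisely the classical argument one finds in that reference (fix the standard frame, use the frame condition to force all coordinates of the unit point nonzero, rescale, and show the stabiliser is the scalars), so nothing further is needed.
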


Let $r$ be the number of points in $\textrm{PG}(n,q)$. Projectivities are defined as permutations of the points of $\textrm{PG}(n,q)$ but we can view projectivities as permutations of $[r]$ by labelling the points of $\textrm{PG}(n,q)$. For a bijection $\psi: \textrm{PG}(n,q) \rightarrow [r]$ and a projectivity $f$, define $f_{\psi} \in \sym_{r}$ by $f_{\psi}(i) = \psi(f(\psi^{-1}(i)))$. Then let $\Psi := \{ f_{\psi} : f \in \textrm{PGL}(n+1,q) \}$. Note $\Psi$ is a permutation subgroup of $\sym_{r}$. The order of $\Psi$ is given by
\begin{equation*}
\vert \Psi \vert = \vert \textrm{PGL}(n+1,q)\vert = \frac{\prod_{i=0}^{n} (q^{n+1} - q^{i})}{q-1}.
\end{equation*}
By establishing the bijection $\psi$, points of $\textrm{PG}(n,q)$ are associated with elements of $[r]$ and so we can treat lines and hyperplanes as subsets of $[r]$ and frames as sequences in $\sym_{r,n+2}$.

\begin{lemma}\label{l:framecover}
For a bijection $\psi: \textup{PG}(n,q) \rightarrow [r]$ and a frame $s \in \sym_{r,n+2}$, the number of permutations in $\Psi$ that cover $s$ is $\vert \Psi \vert / (n+2)!$. 
\end{lemma}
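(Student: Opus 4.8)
The plan is to apply Lemma \ref{l:ascstab} to the frame $s$, which reduces the problem to computing $\vert \asc(s)\vert$ and $\vert \stab(s)\vert$ and checking that their product is $\vert\Psi\vert/(n+2)!$. First I would identify $\orb(s)$: by the Fundamental Theorem of Projective Geometry (\tref{t:fundamental}), $\textrm{PGL}(n+1,q)$ acts sharply transitively on the set of frames, so $\Psi$ acts transitively on the set of all sequences in $\sym_{r,n+2}$ whose underlying point sequence is a frame, and the stabiliser of any one frame is trivial. Hence $\vert\stab(s)\vert = 1$ and $\vert\orb(s)\vert = \vert\Psi\vert$, the latter also following from the Orbit–Stabiliser Theorem. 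So it remains to show $\vert\asc(s)\vert = \vert\Psi\vert/(n+2)!$.

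For that step, observe that $\orb(s)$ consists of exactly those sequences in $\sym_{r,n+2}$ that form frames — the frame property (no $n+1$ of the points lying in a common hyperplane) is preserved by projectivities and, by sharp transitivity plus a count, every frame appears. Now group the sequences in $\orb(s)$ by their underlying $(n+2)$-element point set: each such set that can be ordered into a frame gives rise to exactly $(n+2)!$ orderings, and the frame property is symmetric in the points (it only refers to the unordered collection), so \emph{all} $(n+2)!$ orderings of such a set are frames and hence lie in $\orb(s)$. Among these $(n+2)!$ orderings of a fixed point set, exactly one is increasing in the labelling induced by $\psi$, i.e. lies in $\asc(s)$. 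Therefore $\vert\asc(s)\vert = \vert\orb(s)\vert/(n+2)! = \vert\Psi\vert/(n+2)!$, and Lemma \ref{l:ascstab} gives the number of covering permutations as $\vert\asc(s)\vert\,\vert\stab(s)\vert = \vert\Psi\vert/(n+2)!$.

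The only real content is the observation that the frame property depends only on the unordered set of $n+2$ points, so the orbit $\orb(s)$ is closed under reordering the coordinates of its elements; once that is in hand, the "exactly one ascending ordering per point set" step is immediate and the rest is bookkeeping with the Orbit–Stabiliser Theorem. I would expect the mild subtlety to be making precise that sharp transitivity on frames (Theorem \ref{t:fundamental}, phrased for $\textrm{PGL}$ acting on points) transfers to the permutation representation $\Psi$ acting on $\sym_{r,n+2}$ via the labelling $\psi$ — but this is exactly the translation set up in the paragraph preceding the lemma, so it is routine.
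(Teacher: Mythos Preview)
Your proposal is correct and follows essentially the same route as the paper: use transitivity of $\textup{PGL}(n+1,q)$ on frames (\tref{t:fundamental}) to identify $\orb(s)$ as the set of all frames, observe that the frame property is symmetric in the points so that exactly one in every $(n+2)!$ orbit elements is ascending, and conclude via \lref{l:ascstab} and Orbit--Stabiliser. The only cosmetic difference is that you invoke \emph{sharp} transitivity to pin down $\vert\stab(s)\vert = 1$ explicitly, whereas the paper skips this and applies Orbit--Stabiliser directly to the product $\vert\stab(s)\vert\,\vert\orb(s)\vert$.
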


\begin{proof}
By \tref{t:fundamental}, every frame in $\sym_{r,n+2}$ is part of the same orbit under the action of $\Psi$. For any frame $s \in \sym_{r,n+2}$, any reordering of the points of $s$ will form another frame. Of all the $(n+2)!$ ways of ordering the points of $s$, only one of these sequences is in ascending order. Thus, $\vert \asc(s) \vert = \vert \orb(s) \vert/(n+2)!$. Therefore, by \lref{l:ascstab}, the number of permutations in $\Psi$ that cover $s$ is
\begin{equation*}
\vert \stab(s) \vert \vert \asc(s) \vert = \frac{\vert \stab(s) \vert \vert \orb(s) \vert}{(n+2)!} = \frac{\vert \Psi \vert}{(n+2)!}. \qedhere
\end{equation*}
\end{proof}

A \textit{$k$-arc} in $\textrm{PG}(n,q)$ is a set of $k$ points in $\textrm{PG}(n,q)$, no $n+1$ of which lie in a hyperplane of $\textrm{PG}(n,q)$. 

\begin{theorem}[e.g.~\cite{Ball19}]\label{t:arcsize}
For $q \geq n$, there exists a $(q+1)$-arc in $\textup{PG}(n,q)$.
\end{theorem}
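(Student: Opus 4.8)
The plan is to exhibit the classical example of such an arc, the \emph{normal rational curve} (moment curve). I would set
$N = \{\, (1 : t : t^{2} : \dots : t^{n}) : t \in \textrm{GF}(q) \,\} \cup \{\, (0 : \dots : 0 : 1) \,\}$
and first check that these are $q+1$ distinct points of $\textrm{PG}(n,q)$: two of the ``affine'' points agree only if one homogeneous coordinate vector is a scalar multiple of the other, which (comparing first coordinates) forces the scalar to be $1$ and hence the corresponding values of $t$ to coincide; and $(0:\dots:0:1)$ differs from all of them because its first coordinate vanishes. Since $q \ge n$, this furnishes at least $n+1$ points, so the arc condition is not vacuous.

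The main step is to show that no $n+1$ points of $N$ lie on a common hyperplane. A hyperplane of $\textrm{PG}(n,q)$ is the zero locus of some nonzero linear form $\sum_{i=0}^{n} a_{i}X_{i}$; I would attach to it the polynomial $f(X) = \sum_{i=0}^{n} a_{i}X^{i} \in \textrm{GF}(q)[X]$, which is nonzero of degree at most $n$. Then the point $(1:t:\dots:t^{n})$ lies on the hyperplane precisely when $f(t) = 0$, while $(0:\dots:0:1)$ lies on it precisely when $a_{n} = 0$, that is, when $\deg f \le n-1$ (or $f$ is identically zero).

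Now I would split into two cases. If a hyperplane contains $n+1$ of the affine points of $N$, then $f$ has $n+1$ distinct roots in $\textrm{GF}(q)$ while $\deg f \le n$, forcing $f \equiv 0$ and hence all $a_{i}=0$, a contradiction. If instead a hyperplane contains $(0:\dots:0:1)$ together with $n$ affine points, then $a_{n}=0$, so $\deg f \le n-1$, yet $f$ has $n$ distinct roots, again forcing $f\equiv 0$. These two cases exhaust the ways of choosing $n+1$ points of $N$, so $N$ is a $(q+1)$-arc, which establishes the theorem.

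The argument rests only on the elementary fact that a nonzero polynomial of degree $d$ over a field has at most $d$ roots, so there is no genuine obstacle; the only points needing a little care are verifying that the $q+1$ listed points are pairwise distinct and treating the point at infinity uniformly with the affine points by keeping track of the leading coefficient $a_{n}$.
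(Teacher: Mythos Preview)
Your argument is correct: the normal rational curve is the standard explicit $(q+1)$-arc in $\textup{PG}(n,q)$, and your root-counting argument handles both the affine points and the point at infinity cleanly. Note, however, that the paper does not supply its own proof of this statement; it is quoted as a known result with a reference to the survey of Ball and Lavrauw, so there is no in-paper proof to compare against. Your write-up is essentially the classical proof one finds in that literature.
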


\begin{lemma}\label{l:construction}
For $q \geq n+1$, if $\psi: \textup{PG}(n,q) \rightarrow [r]$ is a bijection such that $\{ \psi^{-1}(i) : i \in [q+1] \}$ is a $(q+1)$-arc, then $\Psi_{[q+1]}$ is a $\textup{PSCA}(q+1, n+2, \vert \Psi \vert/(n+2)!)$.
\end{lemma}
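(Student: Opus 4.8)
The plan is to reduce the statement to \lref{l:framecover} via the single observation that every ordering of $n+2$ distinct points drawn from a $(q+1)$-arc is a frame. First I would invoke the symbol-deletion discussion at the end of \sref{s:generous}: for any multiset $X$ of permutations in $\sym_{r}$ and any $j \in [r]$, a sequence $s \in \sym_{j,t}$ is covered by exactly as many permutations in $X_{[j]}$ as in $X$. Applying this with $X = \Psi$, $j = q+1$ and $t = n+2$, it suffices to prove that every $s \in \sym_{q+1,\, n+2}$ is covered by exactly $\vert \Psi \vert/(n+2)!$ permutations in $\Psi$; note such sequences exist precisely because the hypothesis $q \ge n+1$ forces $q+1 \ge n+2$.

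Next I would fix an arbitrary $s = (s_{1}, \dots, s_{n+2}) \in \sym_{q+1,\, n+2}$ and pass to the corresponding sequence of points $\psi^{-1}(s_{1}), \dots, \psi^{-1}(s_{n+2})$ of $\textup{PG}(n,q)$. These are $n+2$ distinct points all lying in the $(q+1)$-arc $A := \{ \psi^{-1}(i) : i \in [q+1] \}$. By the definition of a $(q+1)$-arc, no $n+1$ points of $A$ lie in a common hyperplane of $\textup{PG}(n,q)$; in particular no $n+1$ of $\psi^{-1}(s_{1}), \dots, \psi^{-1}(s_{n+2})$ lie in a hyperplane, which is exactly the defining condition for $(\psi^{-1}(s_{1}), \dots, \psi^{-1}(s_{n+2}))$ to be a frame. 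Under the identification supplied by $\psi$, this says that $s$ is a frame in $\sym_{r,\, n+2}$.

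Then \lref{l:framecover} gives at once that $s$ is covered by $\vert \Psi \vert/(n+2)!$ permutations of $\Psi$, and by the first step this is also the number of permutations of $\Psi_{[q+1]}$ covering $s$. Since $\Psi_{[q+1]}$ is a multiset of permutations in $\sym_{q+1}$ and $s$ was an arbitrary element of $\sym_{q+1,\, n+2}$, we conclude that $\Psi_{[q+1]}$ is a $\textup{PSCA}(q+1,\, n+2,\, \vert \Psi \vert/(n+2)!)$. I do not expect a genuine obstacle: the only substantive point is recognising that the arc condition and the frame condition coincide, so that \lref{l:framecover} applies verbatim to every sequence over the arc, and everything else is routine bookkeeping with the symbol-deletion operation.
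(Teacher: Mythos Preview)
Your proof is correct and follows essentially the same route as the paper: observe that any $(n+2)$-sequence drawn from the $(q+1)$-arc is a frame, apply \lref{l:framecover}, and then pass to $\Psi_{[q+1]}$ via the symbol-deletion remark at the end of \sref{s:generous}. The paper's proof is terser, but the logical structure is identical.
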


\begin{proof}
Let $s \in \sym_{q+1, n+2}$ and let $\psi$ be as defined in the lemma statement. Then $s$ is a frame. Thus, by \lref{l:framecover}, $s$ is covered by $\vert \Psi \vert/(n+2)!$ permutations in $\Psi$. Hence, $s$ is covered by $\vert \Psi \vert/(n+2)!$ permutations in $\Psi_{[q+1]}$. Therefore, $\Psi_{[q+1]}$ is a PSCA$(q+1, n+2, \vert \Psi \vert/(n+2)!)$.
\end{proof}

Note that for $q \geq n+1$, \tref{t:arcsize} guarantees the existence of a bijection $\psi$ satisfying the condition of \lref{l:construction}. We are now ready to prove \tref{t:upperbound}

\begin{proof}[Proof of Theorem \ref{t:upperbound}]
Let $q$ be the smallest power of 2 such that $q \geq v$. Then $q < 2v$. Let $n = t - 2$. Then, by \lref{l:construction},
\begin{equation*}
g(v,t) \leq \frac{\vert \textrm{PGL}(n+1,q) \vert}{(n+2)!} < \frac{\vert \textrm{PGL}(t-1,2v) \vert}{t!} = \frac{\prod_{i=0}^{t-2}((2v)^{t-1} - (2v)^{i})}{t!(2v - 1)} < \frac{(2v)^{(t-1)^{2}}}{t!(2v - 1)}. \qedhere
\end{equation*}
\end{proof}

\section{Almost perfect sequence covering arrays}\label{s:almost}

We now focus specifically on the case where $n=2$. The sequences now under consideration are those containing four points of $\textup{PG}(2,q)$. These sequences can divided into three families. The first family contains all sequences of four points such that no three are collinear. As hyperplanes and lines are the same in $\textup{PG}(2,q)$, these sequences are frames. By \lref{l:framecover}, in any permutation representation $\Psi \le \sym_{r}$ of $\textrm{PGL}(3,q)$ as defined in Section 3, any frame is covered by $\vert \Psi \vert/24$ permutations. The second family contains all sequences with three collinear points but not four. The third family contains all sequences of four collinear points. 

Let $r = q^{2} + q + 1$ be the number of points in $\textrm{PG}(2,q)$. A \textit{difference set} of $\mathbb{Z}_{r}$ is a set $A = \{ a_{0}, a_{1}, \dots, a_{q} \} \subset \mathbb{Z}_{r}$ such that for any non-zero element $x \in \mathbb{Z}_{r}$, there are $i$ and $j$ such that $a_{i} - a_{j} = x$. If $A = \{a_{0}, \dots, a_{q}\}$ is a difference set of $\mathbb{Z}_{r}$, then the set $\{ \{ a_{i} + j : i \in [q+1] \} : j \in \mathbb{Z}_{r} \}$, where addition is performed modulo $r$, is the lineset of an incidence structure isomorphic to $\textrm{PG}(2,q)$. For any prime power $q$, it is possible to label the points of $\textrm{PG}(2,q)$ such that the labelled lineset has the above form \cite{Sin37}. Let $\psi$ be such a labelling. In this section we show that in the corresponding permutation representation $\Psi \leqslant \sym_{r}$ of $\textup{PGL}(3,q)$, each sequence containing at most three collinear points is covered by $\vert \Psi \vert/24$ permutations. This in turn proves \tref{t:collineationgroup}.

The consequence of this result is that for fixed $v$, we can find a much smaller multiset of permutations than the multiset built in Section 3 such that the vast majority of sequences are covered by a constant number of permutations. This prompts two immediate questions. The first is whether the bound in \tref{t:upperbound} can be reduced specifically when $t=4$. Indeed, in Section 3, our construction relied on reducing the point set to a subset where no three points lay on the same hyperplane. Here, it seems we can relax that condition to one that requires that no four points lie on the same line. A \textit{$(k,d)$-arc} in $\textrm{PG}(2,q)$ is a set of $k$ points in $\textrm{PG}(2,q)$, no $d+1$ of which lie on the same line. Thus, we can form a PSCA of strength 4 by taking the representation $\Psi$ and deleting all but a subset of symbols that form a $(k,3)$-arc. Unfortunately, the maximum size of a $(k,3)$-arc is $2q + 3$ \cite{Hir79}. This is approximately twice the size of the $(k,2)$-arc used in the construction in Section 3. As a result, even if a $(2q+3,3)$-arc exists, using such an arc only reduces the bound in \tref{t:upperbound} by a factor of approximately $2^{8}$.

The second question is whether the permutation representation $\Psi$ may actually form a PSCA by covering sequences containing four collinear points with the same constant number of permutations. A computer search was performed over difference sets of $\mathbb{Z}_{r}$ for orders $5 \leqslant q \leqslant 25$. This search found no representations of the full collineation group $\pgaml(3,q)$ of $\textup{PG}(2,q)$ that also formed a PSCA of strength 4. Examples of representations of $\pgaml(3,2)$, $\pgaml(3,3)$ and $\pgaml(3,4)$ in $\sym_{7}$, $\sym_{13}$ and $\sym_{21}$ respectively that form PSCAs were recorded by Gentle and Wanless \cite{GeWa22}. Note that $\pgaml(3,2) = \textup{PGL}(3,2)$ and $\pgaml(3,3) = \textup{PGL}(3,3)$.

The remainder of this section is devoted to a series of lemmas that collectively prove \tref{t:collineationgroup}. Consider sequences that contain three collinear points but not four. For $i \in \{1, 2, 3, 4\}$, let $T_{i}$ be the set of sequences $s$ of four points in $\textrm{PG}(2,q)$ for which there exists a line $\ell$ such that $s_{i}$ does not lie on $\ell$, but $s_{j}$ does for $j \in \{1, 2, 3, 4\} \backslash \{i \}$. Then, for distinct $i$ and $j$, it is impossible to find a collineation of $\textrm{PG}(2,q)$ that maps a sequence in $T_{i}$ to a sequence in $T_{j}$. However, we have the following lemma.
\begin{lemma}
If $s$ and $s'$ are sequences in $T_{i}$ for some $i \in \{1,2,3,4\}$ then there exists a projectivity of $\textup{PG}(2,q)$ that maps $s$ to $s'$.
\end{lemma}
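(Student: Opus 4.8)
The plan is to use the Fundamental Theorem of Projective Geometry (\tref{t:fundamental}) by enlarging each sequence in $T_i$ to a frame in a controlled way, then transferring the unique projectivity between the enlarged frames back down to the original sequences. Fix $i$ and let $s, s' \in T_i$, with associated lines $\ell$ and $\ell'$ respectively; so three of the four points of $s$ lie on $\ell$ and $s_i \notin \ell$, and similarly for $s'$. First I would show that the three collinear points of $s$ together with $s_i$ can be extended to a frame of $\textup{PG}(2,q)$ by adjoining a single extra point $p$: we need a point $p$ such that no three of the five points $\{s_1, s_2, s_3, s_4, p\}$ are collinear. The only collinear triple among the first four is the one lying on $\ell$, so the constraint is that $p$ must avoid $\ell$, must avoid the line through $s_i$ and each of the three points on $\ell$ (three lines), and must avoid $s_i$ itself; since $q \geq 2$, the number of points of $\textup{PG}(2,q)$ not on these four lines is positive (a standard inclusion count gives at least $q^2 - 3q \geq 0$ for $q\ge 3$, and the $q=2$ case can be checked directly, or one observes $T_i$ is empty for $q=2$ anyway), so such a $p$ exists. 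Do the same for $s'$, obtaining a point $p'$.

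Next I would note that a frame in $\textup{PG}(2,q)$ is exactly an ordered sequence of $n+2 = 4$ points, no three collinear — this is the $n=2$ case of the frame definition, since hyperplanes are lines. So $(s_1, s_2, s_3, s_4, p)$ is \emph{not} itself a frame; instead I should pick three of the collinear points and $s_i$ and $p$. Let me restructure: write the three points of $s$ on $\ell$ as $a, b, c$ (in the order they appear in $s$) and recall $s_i$ is the off-line point. Then any four of $\{a, b, c, s_i, p\}$ that include at most two of $\{a,b,c\}$ form a frame. In particular $(a, b, s_i, p)$ is a frame, and so is $(a, c, s_i, p)$, etc. The cleaner route: choose $p$ and $p'$ as above so that $\{s_1,s_2,s_3,s_4,p\}$ and $\{s'_1,s'_2,s'_3,s'_4,p'\}$ are each $5$-arcs (no three collinear except the forced triple — but a $5$-arc has *no* three collinear, contradiction). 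So the correct statement is: $\{s_i, p, \text{and any two of the three collinear points}\}$ is a frame. I would then apply \tref{t:fundamental} twice: once to the frame $(a,b,s_i,p) \mapsto (a',b',s'_i,p')$ getting a projectivity $g$, where $a,b$ are the first two of the three collinear points of $s$ in order, and $a',b'$ likewise for $s'$. The map $g$ sends $a\mapsto a'$, $b \mapsto b'$, $s_i \mapsto s'_i$. It remains to check $g$ sends the third collinear point $c$ to $c'$: since $g$ is a collineation it maps $\ell = \langle a,b\rangle$ to $\ell' = \langle a', b'\rangle$, so $g(c) \in \ell'$; but I have not pinned down $g(c) = c'$. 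The fix is to instead pick the extension point $p$ to lie on a line that meets $\ell$ in a useful way, or — more simply — to extend by a point $p$ and build the frame $(a, s_i, p, c)$... this still leaves one collinear point free.

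The genuinely efficient argument, and the one I would actually write: use the sharp transitivity of $\textup{PGL}(3,q)$ on frames to move $s_i \mapsto s'_i$ and to move the line $\ell$ to $\ell'$, and then exploit that the stabiliser of the configuration (off-point plus line) still acts transitively enough on ordered triples of the remaining line. Concretely: there is a projectivity $g$ with $g(\ell) = \ell'$ and $g(s_i) = s'_i$ (extend $s_i$ and two points of $\ell$ to a frame, map to $s'_i$ and two points of $\ell'$ and a suitable fourth point). Then $g(s)$ and $s'$ both lie in $T_i$ with the \emph{same} line $\ell'$ and the \emph{same} off-point $s'_i$, so it suffices to show that the pointwise stabiliser of $s'_i$ together with the setwise stabiliser of $\ell'$ in $\textup{PGL}(3,q)$ acts on $\ell'$ (a projective line with $q+1 \geq 4$ points) as a group containing $\textup{PGL}(2,q)$, hence is $3$-transitive on the points of $\ell'$; this lets us map the ordered triple of points of $g(s)$ on $\ell'$ to the ordered triple of $s'$ on $\ell'$ while fixing $s'_i$. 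The main obstacle is precisely this last step — verifying that the relevant subgroup induces the full $\textup{PGL}(2,q)$ (equivalently, all projectivities) on the line $\ell'$; I expect this follows by choosing coordinates so that $\ell'$ is $\{[x:y:0]\}$ and $s'_i = [0:0:1]$, writing the block-triangular matrices that fix $s'_i$ and stabilise $\ell'$, and reading off that their action on $\ell'$ is an arbitrary element of $\textup{GL}(2,q)$ modulo scalars, i.e. all of $\textup{PGL}(2,q)$, which is $3$-transitive. The only care needed is the tiny-$q$ boundary: for $q = 2$ the set $T_i$ is empty (a line has $3$ points and there is no room for a distinct fourth point off it that keeps the count right — or rather the argument above needs $q+1 \geq 3$ points on the line, which holds for all $q$), so no separate case analysis is really required beyond confirming $q \geq 2$ throughout.
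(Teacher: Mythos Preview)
Your final argument is correct, though the write-up takes two false starts before arriving at it; the attempt to enlarge $s$ to a five-point frame cannot work (a frame of $\textup{PG}(2,q)$ has only four points), and the four-point frame idea leaves the third collinear point undetermined, exactly as you diagnose. The approach you settle on---first move the pair $(s_i,\ell)$ to $(s'_i,\ell')$ by any projectivity, then use that the stabiliser of an antiflag induces all of $\textup{PGL}(2,q)$ on the line and hence is $3$-transitive there---is sound, and the coordinate check with block-diagonal matrices is the right way to justify it. One small factual slip: $T_i$ is \emph{not} empty when $q=2$ (a line has three points, so the collinear triple is the whole line and there are four choices for the off-line point); fortunately your argument needs only $q+1\ge 3$, which always holds, so no special case is actually required.

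The paper takes a different, more bare-hands route. Rather than invoking $3$-transitivity abstractly, it picks basis vectors $u\in s_i$, $v,w$ in two of the three collinear points, maps this basis linearly to the corresponding basis for $s'$ (this already handles three of the four points and carries $\ell$ to $\ell'$), and then post-composes with a diagonal matrix in the new basis to push the remaining collinear point $d=[\alpha v'+\beta w']$ to $d'=[\alpha' v'+\beta' w']$. In effect the paper rediscovers the needed piece of $\textup{PGL}(2,q)$ by hand as a single diagonal scaling, whereas you quote the full $3$-transitivity as a black box. The paper's version is shorter and entirely self-contained; yours is more structural and makes clearer \emph{why} the statement holds (namely, the antiflag stabiliser is as large as it could be on the line).
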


\begin{proof}
Let $\{i,j,k,\ell\} = \{1,2,3,4\}$ and let $s_{i} = a$, $s_{j} = b$, $s_{k} = c$, $s_{\ell} = d$ and $s'_{i} = a'$, $s'_{j} = b'$, $s'_{k} = c'$, $s'_{\ell} = d'$. Hence, both $\{ a, b, c \}$ and $\{ a', b', c' \}$ are sets of non-collinear points. As such, the non-zero vectors $u \in a$, $v \in b$ and $w \in c$ form a basis of $V$, as do the non-zero vectors $u' \in a'$, $v' \in b'$ and $w' \in c'$. We can find a matrix $A \in \textrm{GL}(3, q)$ such that $Au = u'$, $Av = v'$ and $Aw = w'$. Let $f$ be the projectivity of $\textrm{PG}(2,q)$ induced by $A$. Then $f(a) = a'$, $f(b) = b'$ and $f(c) = c'$. Let $x \in d$. Since $b,c$ and $d$ are collinear, $x = \alpha v + \beta w$ for some non-zero $\alpha, \beta \in \textrm{GF}(q)$. Hence, $Ax = \alpha v' + \beta w'$. Let $x' \in d'$ and similarly note that $x' = \alpha' v' + \beta' w'$ for some non-zero $\alpha',\beta' \in \textrm{GF}(q)$. We can then find a matrix $B \in \textrm{GL}(3, q)$ such that $Bu' = u'$, $Bv' = \alpha^{-1}\alpha'v'$ and $Bw' = \beta^{-1}\beta'w'$. Then, $BAx = \alpha'v' + \beta'w' = x'$. Let $g$ be the projectivity induced by $B$ and observe $g \circ f (a) = a'$, $g \circ f (b) = b'$, $g \circ f (c) = c'$ and $g \circ f (d) = d'$. Therefore, the projectivity $g \circ f$ maps $s$ to $s'$.
\end{proof}

Now we choose some bijection $\psi : \textrm{PG}(2,q) \rightarrow [r]$ and consider the corresponding permutation group $\Psi \le \sym_{r}$. Let $L$ be the subsets of $[r]$ corresponding to lines of $\textrm{PG}(2,q)$. Then each line $\ell \in L$ can be represented by $\ell = \{\ell_{0}, \dots, \ell_{q}\} \subseteq [r]$ where $\ell_{0} < \ell_{1} < \dots < \ell_{q}$. Our next goal is to find $\vert \asc(s) \vert$ for $s \in T_{i}$. First consider $T_{1}$. Let $\ell \in L$ and let $i \in [q+1]$. There are $\binom{q - i}{2}$ 3-subsets of $\ell$ that have $\ell_{i}$ as their minimal element and there are $\ell_{i} - i$ points less than $\ell_{i}$ that do not lie on $\ell$. Therefore, for $s \in T_{1}$,
\begin{equation}
\vert \asc(s) \vert = \sum_{\ell \in L} \sum_{i = 0}^{q} \binom{q - i}{2} (\ell_{i} - i).  \label{e:asct1} \tag{\textup{E1}}
\end{equation}

Next, consider $T_{2}$. Suppose we build a sequence in $T_{2}$ containing three points on $\ell$ such that $\ell_{i}$ is the minimum of these points. Then, for $j > i$, there are $q - j$ points on $\ell$ greater than $\ell_{j}$ and $(\ell_{j} - j) - (\ell_{i} - i)$ points between $\ell_{i}$ and $\ell_{j}$ that do not lie on $\ell$. Therefore, for $s \in T_{2}$,
\begin{align}
\vert \asc(s) \vert &=  \sum_{\ell \in L}  \sum_{0 \leq i < j \leq q} (q - j)((\ell_{j} - j) -  (\ell_{i} - i))  \nonumber\\
&= \sum_{\ell \in L} \left( \sum_{j = 1}^{q} \sum_{i = 0}^{j-1} (q - j)\ell_{j} - \sum_{j=1}^{q}\sum_{i=0}^{j-1} (q-j)j - \sum_{i=0}^{q-1}\sum_{j=i+1}^{q} (q-j)\ell_{i} + \sum_{i=0}^{q-1}\sum_{j=i+1}^{q} (q-j)i \right) \nonumber\\
&= \sum_{\ell \in L} \left( \sum_{j=1}^{q} \left(j(q-j)\ell_{j} - j^{2}(q-j)\right) - \sum_{i=0}^{q-1} \left( \binom{q-i}{2}\ell_{i} - \binom{q-i}{2}i \right) \right) \nonumber\\
&= \sum_{\ell \in L} \sum_{i=0}^{q} \left( i(q-i) - \binom{q-i}{2} \right)(\ell_{i} - i). \label{e:asct2} \tag{\textup{E2}}
\end{align}

Next, consider $T_{3}$. Suppose we build a sequence in $T_{3}$ containing three points on $\ell$ such that $\ell_{j}$ is the maximum of these points. Then, for $i < j$, there are $i$ points on $\ell$ less than $\ell_{i}$ and $(\ell_{j} - j) - (\ell_{i} - i)$ points between $\ell_{i}$ and $\ell_{j}$ that do not lie on $\ell$. Therefore, for $s \in T_{3}$,
\begin{align}
\vert \asc(s) \vert &=  \sum_{\ell \in L}  \sum_{0 \leq i < j \leq q} i((\ell_{j} - j) -  (\ell_{i} - i))  \nonumber\\
&= \sum_{\ell \in L} \left( \sum_{j = 1}^{q} \sum_{i = 0}^{j-1} i\ell_{j} - \sum_{j=1}^{q}\sum_{i=0}^{j-1} ij - \sum_{i=0}^{q-1}\sum_{j=i+1}^{q} i\ell_{i} + \sum_{i=0}^{q-1}\sum_{j=i+1}^{q} i^{2} \right) \nonumber\\
&= \sum_{\ell \in L} \left( \sum_{j=1}^{q} \left( \binom{j}{2}\ell_{j} - \binom{j}{2}j \right) - \sum_{i=0}^{q-1} \left( i(q-i)\ell_{i} - i^{2}(q - i) \right) \right) \nonumber\\
&= \sum_{\ell \in L} \sum_{i=0}^{q} \left( \binom{i}{2} - i(q-i) \right)(\ell_{i} - i). \label{e:asct3} \tag{\textup{E3}}
\end{align}

Finally, consider $T_{4}$. Let $\ell \in L$ and let $i \in [q+1]$. There are $\binom{i}{2}$ 3-subsets of $\ell$ whose maximum is $\ell_{i}$ and there are $q^{2} + q - \ell_{i} - (q-i)$ points greater $\ell_{i}$ that do not lie on $\ell$. Therefore, for $s \in T_{4}$,
\begin{align}
\vert \asc(s) \vert = \sum_{\ell \in L}  \sum_{i = 0}^{q} \binom{i}{2} (q^{2} - (\ell_{i} - i)).  \label{e:asct4} \tag{\textup{E4}}
\end{align}
Therefore,
\begin{align}
\eqref{e:asct1} + \eqref{e:asct2} + \eqref{e:asct3} + \eqref{e:asct4} &= \sum_{\ell \in L} \sum_{i = 0}^{q} \binom{i}{2}q^{2} \nonumber\\
&= \frac{(q^{2} + q + 1)q^{3}(q + 1)(q - 1)}{6}. \label{e:asctot} \tag{\textup{E5}}
\end{align}

Later in this section, we will prove the existence of a particular representation $\Psi$ such that \eref{e:asct1} = \eref{e:asct2} = \eref{e:asct3} = \eref{e:asct4}. Our first step is to equate \eref{e:asct1} + \eref{e:asct4} and \eref{e:asct2} + \eref{e:asct3} for which we require the following lemma.

\begin{lemma}\label{l:linesum}
Let $L$ be the lineset of $\textup{PG}(2,q)$ as subsets of $[r]$. Then,
\begin{equation*}
\sum_{\ell \in L} \sum_{i = 0}^{q} i\ell_{i} = \frac{(q^{2} + q )(q^{2} + q + 1)(2q^{2} + 2q + 1)}{6}
\end{equation*}
\end{lemma}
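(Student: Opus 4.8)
The plan is to evaluate the sum by a double count over unordered pairs of points. First I would observe that for a fixed line $\ell = \{\ell_0 < \ell_1 < \dots < \ell_q\} \subseteq [r]$, the index $i$ is exactly the number of points of $\ell$ that are strictly smaller than $\ell_i$. Consequently the inner sum can be rewritten as
\begin{equation*}
\sum_{i=0}^{q} i\ell_i = \sum_{\substack{\{a,b\} \subseteq \ell \\ a < b}} b,
\end{equation*}
since each unordered pair of points of $\ell$ contributes its larger element, and collecting all pairs whose larger element is $\ell_i$ produces exactly the term $i\ell_i$.

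Next I would sum this identity over all $\ell \in L$ and exchange the order of summation, invoking the defining incidence property of $\textup{PG}(2,q)$ that every pair of distinct points lies on exactly one line. Hence every pair $\{a,b\} \subseteq [r]$ with $a < b$ is counted precisely once, so
\begin{equation*}
\sum_{\ell \in L} \sum_{i=0}^{q} i\ell_i = \sum_{0 \le a < b \le r-1} b = \sum_{b=0}^{r-1} b \cdot \bigl|\{\, a : 0 \le a < b \,\}\bigr| = \sum_{b=0}^{r-1} b^{2} = \frac{(r-1)r(2r-1)}{6}.
\end{equation*}
Substituting $r = q^{2} + q + 1$, so that $r - 1 = q^{2} + q$ and $2r - 1 = 2q^{2} + 2q + 1$, gives the stated formula.

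I do not expect a genuine obstacle here: the argument is a clean double count followed by the standard evaluation of $\sum_{b=0}^{r-1} b^{2}$. The only points requiring a little care are the bookkeeping in the first step (that the index $i$ genuinely equals the number of smaller points on $\ell$, which is immediate once the points of $\ell$ are listed in increasing order) and noting that the reindexing $\sum_{a<b} b = \sum_b b^{2}$ is correct because there are exactly $b$ values of $a$ below each $b$. It is also worth remarking that this lemma uses only the incidence structure of the plane and not the particular cyclic (Singer difference set) labelling $\psi$, so it holds for every bijection $\psi : \textup{PG}(2,q) \to [r]$.
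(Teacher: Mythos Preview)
Your proof is correct and is essentially the same double count as the paper's: both recognise that $i$ counts the points of $\ell$ smaller than $\ell_i$, invoke the fact that every pair of distinct points lies on a unique line, and reduce the sum to $\sum_{j=0}^{r-1} j^{2}$. The only difference is cosmetic---you organise the count over unordered pairs $\{a,b\}$, while the paper fixes the larger point $j$ and sums $i$ over the lines through $j$.
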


\begin{proof}
Let $j \in [r]$ and let $J = \{ (i, \ell) : \ell_{i} = j \}$. Consider $\sum_{(i,\ell) \in J} i \ell_{i}$. For each pair $(i, \ell) \in J$, $\ell_{i} = j$ and $i$ is the number of points less than $j$ that lie on $\ell$. There are $j$ elements of $[r]$ less than $j$, each of which must lie on the same line as $j$ exactly once. Therefore,
\begin{equation*}
\sum_{(i,\ell) \in J} i \ell_{i} = j \sum_{(i,\ell) \in J} i = j^{2}.
\end{equation*}
Therefore, 
\begin{equation*}
\sum_{\ell \in L} \sum_{i = 0}^{q} i\ell_{i} = \sum_{j = 0}^{q^{2} + q} j^{2} = \frac{(q^{2} + q)(q^{2} + q + 1)(2q^{2} + 2q + 1)}{6} \qedhere
\end{equation*}

\end{proof}

\begin{lemma}\label{l:ascsum}
In any representation $\Psi$, \eref{e:asct1} $+$ \eref{e:asct4} $=$ \eref{e:asct2} $+$ \eref{e:asct3} $=$ \eref{e:asctot}/$2$.
\end{lemma}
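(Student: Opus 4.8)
The plan is to prove the identity $\eref{e:asct1}$ $+$ $\eref{e:asct4}$ $=$ $\eref{e:asct2}$ $+$ $\eref{e:asct3}$ directly. Combined with the identity $\eref{e:asct1}$ $+$ $\eref{e:asct2}$ $+$ $\eref{e:asct3}$ $+$ $\eref{e:asct4}$ $=$ $\eref{e:asctot}$ established just before the lemma, equality of the two halves forces each of them to equal $\eref{e:asctot}/2$, which is the claim. Nothing in the argument uses any property of the bijection $\psi$, so it applies to every representation $\Psi$.

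The first step is to collect all four sums term by term over $\ell \in L$ and $i \in [q+1]$. Every summand of \eref{e:asct1}, \eref{e:asct2}, \eref{e:asct3}, and the second summand of \eref{e:asct4}, carries a factor $(\ell_i - i)$; one checks that the coefficient of $(\ell_i - i)$ in $\eref{e:asct1}$ $+$ $\eref{e:asct4}$ $-$ $\eref{e:asct2}$ $-$ $\eref{e:asct3}$ is $2\binom{q-i}{2} - 2\binom{i}{2}$. The key observation is that, despite appearances, this is linear in $i$, namely $(q-1)(q-2i)$. Adding in the remaining $(\ell_i-i)$-free part $q^{2}\sum_{\ell\in L}\sum_{i=0}^{q}\binom{i}{2}$ of \eref{e:asct4}, one finds that $\eref{e:asct1}$ $+$ $\eref{e:asct4}$ $-$ $\eref{e:asct2}$ $-$ $\eref{e:asct3}$ equals
\begin{equation*}
(q-1)\sum_{\ell \in L}\sum_{i=0}^{q}(q-2i)(\ell_{i} - i) + q^{2}\sum_{\ell \in L}\sum_{i=0}^{q}\binom{i}{2}.
\end{equation*}

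Because the surviving coefficient $(q-2i)$ is linear in $i$, the only $\psi$-dependent quantities left to evaluate are the two "moments" $M_{0} := \sum_{\ell\in L}\sum_{i=0}^{q}(\ell_{i}-i)$ and $M_{1} := \sum_{\ell\in L}\sum_{i=0}^{q} i(\ell_{i}-i)$, and both turn out to have closed forms independent of $\psi$. I would compute $M_{0}$ by double counting: each of the $r = q^{2}+q+1$ points of $\textrm{PG}(2,q)$ lies on exactly $q+1$ lines, so $\sum_{\ell}\sum_{i}\ell_{i} = (q+1)\binom{r}{2}$, while $\sum_{\ell}\sum_{i} i = r\binom{q+1}{2}$; subtracting gives $M_{0}$. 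I would compute $M_{1}$ from \lref{l:linesum}, which supplies $\sum_{\ell}\sum_{i} i\ell_{i}$, together with the elementary $\sum_{\ell}\sum_{i} i^{2} = r\sum_{i=0}^{q} i^{2}$. Substituting these closed forms, and $\sum_{\ell}\sum_{i}\binom{i}{2} = r\binom{q+1}{3}$, into the displayed equation, both terms become rational multiples of $q^{3}(q+1)(q-1)(q^{2}+q+1)$ that cancel, so the difference is $0$.

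I do not expect a genuine obstacle: past \lref{l:linesum}, the argument is a double-counting step followed by routine polynomial arithmetic in $q$. The only point needing care is that a naive expansion of \eref{e:asct1}--\eref{e:asct4} looks like it requires the moment $\sum_{\ell}\sum_{i} i^{2}\ell_{i}$, which genuinely depends on the labelling $\psi$; it is exactly the collapse $2\binom{q-i}{2} - 2\binom{i}{2} = (q-1)(q-2i)$ that kills the quadratic term and leaves only the two labelling-free moments $M_{0}$ and $M_{1}$.
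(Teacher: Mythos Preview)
Your proposal is correct and follows essentially the same route as the paper. The paper computes \eref{e:asct2}$+$\eref{e:asct3} directly and shows it equals \eref{e:asctot}$/2$, whereas you compute the difference \eref{e:asct1}$+$\eref{e:asct4}$-$\eref{e:asct2}$-$\eref{e:asct3}$ and show it vanishes; both arguments hinge on the same linearity collapse $\binom{q-i}{2}-\binom{i}{2}=\tfrac{1}{2}(q-1)(q-2i)$, the same double count for $M_{0}$, and the same appeal to \lref{l:linesum} for $M_{1}$.
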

\begin{proof}
Using the expressions \eref{e:asct2} and \eref{e:asct3},
\begin{align*}
\eqref{e:asct2} + \eqref{e:asct3} &= \sum_{\ell \in L} \sum_{k = 0}^{q} \left( \binom{k}{2} - \binom{q - k}{2} \right)(\ell_{k} - k)\\
&= \frac{1}{2} \sum_{\ell \in L} \sum_{k = 0}^{q} \left(2k(q - 1) - (q^{2} - q) \right)(\ell_{k} - k).
\end{align*}
First, consider
\begin{align*}
\sum_{\ell \in L} \sum_{k = 0}^{q}  (q^{2} - q)(\ell_{k} - k).
\end{align*}
The sum $\sum_{\ell \in L} \sum_{k = 0}^{q} \ell_{k}$ is just the sum of all the points of every line of $\textrm{PG}(2,q)$. Each point appears on $q + 1$ lines so this is equal to $\binom{q^{2} + q + 1}{2}(q + 1)$. Next, $\sum_{\ell \in L}\sum_{k = 0}^{q} k = (q^{2} + q + 1)\binom{q + 1}{2}$. Therefore,
\begin{align*}
\sum_{\ell \in L} \sum_{k = 0}^{q}  (q^{2} - q)(\ell_{k} - k) &= (q^{2} - q) \left( \binom{q^{2} + q + 1}{2}(q + 1) - (q^{2} + q + 1)\binom{q + 1}{2} \right)\\
&= (q^{2} + q + 1)(q^{2} - q)(q + 1) \left( \frac{q^{2} + q}{2} - \frac{q}{2} \right)\\
&= \frac{1}{2}(q^{2} + q + 1)(q^{3} - q)q^{2}.
\end{align*}
Next, consider
\begin{equation*}
\sum_{\ell \in L} \sum_{k = 0}^{q} k (\ell_{k} - k).
\end{equation*}
By \lref{l:linesum}, this is equal to
\begin{equation*}
\frac{1}{6} \left( (q^{2} + q)(q^{2} + q + 1)(2q^{2} + 2q + 1) - (q^{2} + q + 1)q(q + 1)(2q + 1) \right) = \frac{(q^{2} + q + 1)(q^{2} + q)q^{2}}{3}.
\end{equation*}
Therefore,
\begin{align*}
\eqref{e:asct2} + \eqref{e:asct3} &= \frac{1}{2}\left( \frac{2(q^{2} + q + 1)(q^{3} - q)q^{2}}{3} - \frac{(q^{2} + q + 1)(q^{3} - q)q^{2}}{2} \right)\\
&= \frac{(q^{2} + q + 1)(q^{3} - q)q^{2}}{12}\\
&= \eqref{e:asctot}/2.
\end{align*}
As \eref{e:asct1} + \eref{e:asct2} + \eref{e:asct3} + \eref{e:asct4} = \eref{e:asctot}, it must also be true that \eref{e:asct1} + \eref{e:asct4} = \eref{e:asctot}/2.
\end{proof} 

\lref{l:ascsum} applies generally to any representation of $\textrm{PGL}(3,q)$, but now we will choose a specific representation according to a difference set of $\mathbb{Z}_{r}$. Recall that a difference set is a set $A = \{ a_{0}, a_{1}, \dots, a_{q} \} \subset \mathbb{Z}_{r}$ such that for any non-zero element $x \in \mathbb{Z}_{r}$, there are $i$ and $j$ such that $a_{i} - a_{j} = x$. For the rest of this section, we choose $\psi$ such that the lines of $\textrm{PG}(2,q)$ are mapped to sets of the form $\{ a_{i} + j : i \in [q+1] \}$ for $j \in \mathbb{Z}_{r}$ and a difference set $A = \{a_{0}, \dots, a_{q}\}$ where $a_{i} < a_{i+1}$ for $i \in \{0, \dots, q-1 \}$. Furthermore, we can assume without loss of generality that $a_{0} = 0$. Let $A + j$ denote the line $\{ a_{i} + j : i \in [q+1] \}$. For any integer $k$, define $a_{k(q + 1) + i} := a_{i}$. 

Our goal is to show that for this particular choice of $\psi$, in the corresponding permutation subgroup $\Psi \le \sym_{r}$, \eref{e:asct1} = \eref{e:asct2} = \eref{e:asct3} = \eref{e:asct4}. This will be done by proving that for $i \in [q+1]$
\begin{equation*}
\sum_{\ell \in L} (q^{2} + q - \ell_{q - i}) = \sum_{\ell \in L} \ell_{i}.
\end{equation*}
To do so, we first require the following lemma.

\begin{lemma}\label{l:diffset}
For the difference set $A = \{ a_{0}, a_{1}, \dots, a_{q} \} \subset \mathbb{Z}_{r}$, and for $i \in [q+1]$,
\begin{equation}
\sum_{k = 0}^{q} (a_{k + i} - a_{k})(a_{k} - a_{k - 1}) = \sum_{k = 0}^{q} (a_{k + 1} - a_{k})(a_{k} - a_{k - i}) \label{e:diffset}
\end{equation} 
\end{lemma}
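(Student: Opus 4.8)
The plan is to show that both sides of \eqref{e:diffset} collapse, after elementary manipulation of cyclic sums, to the same cyclic autocorrelation of the sequence of gaps of $A$; in particular, I expect that nothing about $A$ beyond the periodic indexing convention $a_{k(q+1)+i}=a_i$ is actually needed.

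The first step is to pass from the points $a_k$ to the gaps $b_k := a_{k+1}-a_k$, defined for every integer $k$ and periodic with period $q+1$. Each of the three differences occurring in \eqref{e:diffset} telescopes into a block of consecutive gaps: $a_k-a_{k-1}=b_{k-1}$, $a_{k+i}-a_k=b_k+b_{k+1}+\dots+b_{k+i-1}$, and $a_k-a_{k-i}=b_{k-i}+b_{k-i+1}+\dots+b_{k-1}$. Substituting the first two expressions into the left-hand side of \eqref{e:diffset}, swapping the order of the resulting finite double sum, and shifting the inner index by $1$, I expect the left-hand side to become $\sum_{j=1}^{i}Q_j$, where $Q_j:=\sum_{k=0}^{q}b_kb_{k+j}$.

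Next I would perform the analogous substitution on the right-hand side: writing $a_k-a_{k-i}$ as a sum of $i$ consecutive gaps and interchanging sums turns the right-hand side into $\sum_{m=1}^{i}\sum_{k=0}^{q}b_kb_{k-m}=\sum_{m=1}^{i}Q_{-m}$. Since $Q_{-m}=Q_m$ (substitute $k\mapsto k+m$ in the cyclic sum), this also equals $\sum_{j=1}^{i}Q_j$, and the two sides agree. The degenerate case $i=0$ needs to be noted separately, but there both sides are visibly $0$.

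There is no substantive obstacle here: the argument is pure bookkeeping, and the only points that need care are keeping the periodic index convention consistent and justifying each index shift in the cyclic sums. It seems worth flagging, though --- given the lemma is stated for ``the difference set $A$'' --- that the difference-set property is never invoked; the identity holds verbatim for any strictly increasing sequence $a_0<\dots<a_q$ extended periodically. (Equivalently, one can expand the two products directly and collect the result into the symmetric sums $P_j:=\sum_{k=0}^{q}a_ka_{k+j}$, where each side reduces to $P_i-P_{i+1}-P_0+P_1$; the gap-sequence version is simply a cleaner packaging of the same cancellation.)
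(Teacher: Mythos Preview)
Your proof is correct and, in substance, the same as the paper's: the paper simply expands the left-hand side into four monomials, cyclically reindexes each term (so that $\sum_k a_{k+i}a_k$ becomes $\sum_k a_k a_{k-i}$, etc.), and regroups to obtain the right-hand side --- exactly the $P_j$ computation you sketch in your final parenthetical, with your gap-based version being a mild repackaging of the same cyclic shifts. Your remark that the difference-set property is never invoked is also correct.
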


\begin{proof}
\begin{align*}
\sum_{k = 0}^{q} (a_{k + i} - a_{k})(a_{k} - a_{k - 1}) &= \sum_{k = 0}^{q} \left(a_{k + i}a_{k} - a_{k + i}a_{k - 1} - a_{k}^{2} + a_{k}a_{k - 1}\right)\\
&= \sum_{k=0}^{q} \left( a_{k}a_{k-i} - a_{k+1}a_{k-i} - a_{k}^{2} + a_{k+1}a_{k}\right)\\
&= \sum_{k = 0}^{q} (a_{k + 1} - a_{k})(a_{k} - a_{k-i}). \qedhere
\end{align*}

\end{proof}

\begin{lemma}
Let $L = \{A + j: j \in \mathbb{Z}_{r} \}$ for the difference set $A = \{ a_{0}, \dots, a_{q} \}$. For $i \in [q+1]$
\begin{equation}
\sum_{\ell \in L} (q^{2} + q - \ell_{q - i}) = \sum_{\ell \in L} \ell_{i}. \label{e:lineflip}
\end{equation}
\end{lemma}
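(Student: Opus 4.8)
The plan is to evaluate both sides of \eref{e:lineflip} by analysing the sorted order of each line $\ell = A+j$. Write $A = \{0 = a_0 < a_1 < \dots < a_q\}$, with indices of the $a_k$ read periodically modulo $q+1$ (as already agreed in the text), and for $j \in \{0,\dots,r-1\}$ recall $A+j = \{(a_t+j)\bmod r : 0 \le t \le q\}$. Since the unreduced sequence $(a_0+j,\dots,a_q+j)$ is increasing, the sorted list $(\ell_0 < \dots < \ell_q)$ of $A+j$ is the cyclic rotation of $(a_0+j,\dots,a_q+j)$ that begins with the least element of $A+j$; equivalently, letting $k(j)$ be the largest index with $a_{k(j)}+j < r$ (so $k(j)=q$ precisely when no term wraps past $r$),
\begin{equation*}
\ell_m(A+j) = \begin{cases} a_{k(j)+1+m} + j - r, & 0 \le m \le q-1-k(j),\\ a_{k(j)+m-q} + j, & q-k(j) \le m \le q.\end{cases}
\end{equation*}
A short count then gives $|\{j : k(j)=k\}| = a_{k+1}-a_k$ for $0 \le k < q$ and $|\{j : k(j)=q\}| = r - a_q$.

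Next I would compute $\sum_{\ell \in L}\ell_i = \sum_{j=0}^{r-1}\ell_i(A+j)$ by partitioning the values of $j$ according to $k(j)$. For a fixed $k$, position $i$ lies in the first (``wrapped'') block of the formula exactly when $k \le q-1-i$ and in the second block otherwise, while the part $\sum_j j$ always contributes $\binom{r}{2}$. Summing over $j$ and reindexing recombines the two blocks (this is where the $-r$ from each wrapped entry, the extra $r$ hidden in the gap at $k=q$, and the $\binom{r}{2}$ must all be tracked) and yields
\begin{equation*}
\sum_{\ell \in L}\ell_i = \sum_{k=0}^{q}(a_{k+1}-a_k)\,a_{k+1+i} + r(a_i - a_{q-i}) + \binom{r}{2},
\end{equation*}
all indices read periodically. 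The same computation with $q-i$ in place of $i$, using $a_{k+1+(q-i)} = a_{k-i}$, gives the analogous expression for $\sum_{\ell \in L}\ell_{q-i}$. Since $\sum_{\ell \in L}(q^2+q-\ell_{q-i}) = r(q^2+q) - \sum_{\ell \in L}\ell_{q-i}$ and $r(q^2+q) = r(r-1) = 2\binom{r}{2}$, the constant terms and the linear terms $\pm r(a_i-a_{q-i})$ cancel on forming the difference, so \eref{e:lineflip} is equivalent to
\begin{equation*}
\sum_{k=0}^{q}(a_{k+1}-a_k)\,a_{k+1+i} + \sum_{k=0}^{q}(a_{k+1}-a_k)\,a_{k-i} = 0 .
\end{equation*}

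To finish, shift $k$ by one in the first sum to write it as $\sum_k a_{k+i}(a_k - a_{k-1})$; then the first sum equals the left-hand side of \lref{l:diffset} plus $\sum_k a_k(a_k-a_{k-1})$, and the second sum equals $\sum_k (a_{k+1}-a_k)a_k$ minus the right-hand side of \lref{l:diffset}. Adding these and invoking \lref{l:diffset}, the two bilinear terms cancel and what remains is $\sum_k (a_{k+1}-a_k)a_{k+1} + \sum_k (a_{k+1}-a_k)a_k = \sum_k (a_{k+1}^2 - a_k^2)$, which telescopes to $0$ by periodicity. This proves the displayed identity, hence \eref{e:lineflip}.

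The step I expect to be fiddly rather than deep is the middle one: correctly determining, for each $j$, which points of $A+j$ wrap past $0$, carrying the index arithmetic modulo $q+1$ consistently, and verifying that the $-r$ corrections from the wrapped points together with $\sum_j j$ assemble into exactly the stated linear-plus-$\binom{r}{2}$ term. Once the two sums are in the displayed bilinear form there is essentially no obstacle: the reduction to \lref{l:diffset} and the telescoping are routine, and \lref{l:diffset} is itself merely a reindexing identity for the periodic sequence $(a_k)$.
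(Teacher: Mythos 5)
Your proposal is correct and takes essentially the same route as the paper's proof: both partition the $r$ translates $A+j$ according to which element of $A$ sits at the wrap point (each class having size $a_{k+1}-a_k$) and both reduce the identity to \lref{l:diffset}. The only difference is bookkeeping --- the paper runs two symmetric analyses (one tracking the minimum of each line for $\sum_{\ell}\ell_i$, one tracking the maximum for $\sum_{\ell}(q^2+q-\ell_{q-i})$) and applies \lref{l:diffset} directly, whereas you derive a single closed form for $\sum_{\ell}\ell_i$ (which checks out, e.g.\ for $q=2$, $A=\{0,1,3\}$) and specialise it to $q-i$, at the cost of an extra telescoping step at the end.
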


\begin{proof}
First, we consider $\sum_{\ell \in L} \ell_{i}$ for $i \in [q+1]$. Consider the values of $j$ for which $a_{k} + j$ is the smallest element of $A + j$. First, when $j = q^{2} + q + 1 - a_{k}$, the smallest element of $A + j$ is $a_{k} + j = 0$. Then, when $j = q^{2} + q + 1 - a_{k - 1} - 1$, the smallest element of $A + j$ is $a_{k} + j = a_{k} - a_{k - 1} - 1$ and the largest element of $A + j$ is $a_{k - 1} + j = q^{2} + q$. Hence, the smallest element of $A + (j + 1)$ would be $a_{k - 1} + (j + 1) = 0$. So, for $j \in [q^{2} + q + 1 - a_{k}, q^{2} + q - a_{k - 1}]$, the smallest point on the line $A + j$ is $a_{k} + j$. The $i$th smallest point on these lines will therefore be $a_{k + i} + j$ and will range in value from $a_{k + i} - a_{k}$ to $a_{k + i} - a_{k - 1} - 1$. The sum of the integers in this interval is
\begin{equation*}
\binom{a_{k} - a_{k - 1}}{2} + (a_{k + i} - a_{k})(a_{k} - a_{k - 1}).
\end{equation*}
Therefore,
\begin{equation*}
\sum_{\ell \in L} \ell_{i} = \sum_{k = 0}^{q} \left( \binom{a_{k} - a_{k - 1}}{2} + (a_{k + i} - a_{k})(a_{k} - a_{k - 1}) \right).
\end{equation*}

Similarly, the range of values for $j$ for which $a_{k} + j$ is the largest point of the line $A + j$ is the interval $[ q^{2} + q + 1 - a_{k + 1}, q^{2} + q + 1 - a_{k} - 1]$. For these lines, the $i$th largest point is $a_{k - i} + j$ which ranges in value from $q^{2} + q + 1 + a_{k - i} - a_{k + 1}$ to $q^{2} + q + a_{k - i} - a_{k}$. Therefore,
\begin{align*}
\sum_{\ell \in L} (q^{2} + q - \ell_{q - i}) &= \sum_{k=0}^{q} \sum_{j = q^{2} + q + 1 - a_{k + 1}}^{q^{2} + q  - a_{k}} (q^{2} + q - (a_{k - i} + j))\\
&= \sum_{k=0}^{q} \sum_{j = 0}^{a_{k + 1} - a_{k} - 1} (a_{k} - a_{k - i} + j)\\
&= \sum_{k=0}^{q} \left(\binom{a_{k + 1} - a_{k}}{2} + (a_{k + 1} - a_{k})(a_{k} - a_{k - i})\right).
\end{align*} 
By \lref{l:diffset},
\begin{equation*}
\sum_{\ell \in L} (q^{2} + q - \ell_{q - i}) =  \sum_{k = 0}^{q} \left( \binom{a_{k + 1} - a_{k}}{2} + (a_{k + i} - a_{k})(a_{k} - a_{k - 1}) \right) = \sum_{\ell \in L} \ell_{i}. \qedhere
\end{equation*}

\end{proof}

Now we can prove that for this choice of $\Psi$, \eref{e:asct1} = \eref{e:asct2} = \eref{e:asct3} = \eref{e:asct4}.
\begin{lemma}\label{l:equalasc}
Let $\psi: \textup{PG}(2,q) \rightarrow [r]$ be a bijection such that the lines of $\textup{PG}(2,q)$ are mapped to the sets $\{A + j: j \in \mathbb{Z}_{r} \}$ for the difference set $A = \{ a_{0}, \dots, a_{q} \}$. Then, $\eqref{e:asct1} = \eqref{e:asct2} = \eqref{e:asct3} = \eqref{e:asct4} = \eqref{e:asctot}/4$.
\end{lemma}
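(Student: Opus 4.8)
The plan is to prove the two reflection identities \eref{e:asct1}$\,=\,$\eref{e:asct4} and \eref{e:asct2}$\,=\,$\eref{e:asct3}, and then combine them with \lref{l:ascsum}. For $i \in [q+1]$ set $D_i := \sum_{\ell \in L}(\ell_i - i)$. Since there are $r$ lines, the outer sum over $\ell$ in each of \eref{e:asct1}--\eref{e:asct4} can be pushed in past the $i$-independent weights, giving
\eref{e:asct1}$\,=\,\sum_{i=0}^{q}\binom{q-i}{2}D_i$, \ \eref{e:asct2}$\,=\,\sum_{i=0}^{q}\!\left(i(q-i)-\binom{q-i}{2}\right)\!D_i$, \ \eref{e:asct3}$\,=\,\sum_{i=0}^{q}\!\left(\binom{i}{2}-i(q-i)\right)\!D_i$, and \eref{e:asct4}$\,=\,\sum_{i=0}^{q}\binom{i}{2}(rq^{2}-D_i)$.

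The crucial step is to convert \eref{e:lineflip}, just established for the difference-set labelling $\psi$, into a relation between $D_i$ and $D_{q-i}$. Since $\sum_{\ell}\ell_i = D_i + ri$ and $\sum_{\ell}\ell_{q-i} = D_{q-i} + r(q-i)$, substituting into \eref{e:lineflip} and simplifying yields
\begin{equation*}
D_i + D_{q-i} = rq^{2} \qquad \text{for all } i \in [q+1].
\end{equation*}
With this, \eref{e:asct1}$\,=\,$\eref{e:asct4} is immediate: reindexing \eref{e:asct1} by $j=q-i$ and replacing $D_{q-j}$ by $rq^{2}-D_j$ produces exactly $\sum_{j=0}^{q}\binom{j}{2}(rq^{2}-D_j)=$~\eref{e:asct4}. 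For \eref{e:asct2}$\,=\,$\eref{e:asct3} I would reindex \eref{e:asct2} by $j=q-i$ and substitute $D_{q-j}=rq^{2}-D_j$, turning it into $rq^{2}\sum_{j=0}^{q}\left(j(q-j)-\binom{j}{2}\right)-\sum_{j=0}^{q}\left(j(q-j)-\binom{j}{2}\right)D_j$; the first term vanishes because $\sum_{j=0}^{q}j(q-j)=\binom{q+1}{3}=\sum_{j=0}^{q}\binom{j}{2}$, and the second term is precisely $-$\eref{e:asct3} rewritten as $-\sum_{j=0}^{q}\left(j(q-j)-\binom{j}{2}\right)D_j$, i.e.\ it equals \eref{e:asct3}.

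To finish, \lref{l:ascsum} gives \eref{e:asct1}$\,+\,$\eref{e:asct4}$\,=\,$\eref{e:asct2}$\,+\,$\eref{e:asct3}$\,=\,$\eref{e:asctot}$/2$, and combined with the two reflection identities this forces \eref{e:asct1}$\,=\,$\eref{e:asct2}$\,=\,$\eref{e:asct3}$\,=\,$\eref{e:asct4}$\,=\,$\eref{e:asctot}$/4$, as claimed. The only ingredient carrying genuine content is the deduction $D_i + D_{q-i}=rq^{2}$ from \eref{e:lineflip} — this is the point at which the difference-set structure really enters — and everything afterwards is bookkeeping; the one place demanding care is keeping the reflection $i\mapsto q-i$ applied consistently to the binomial-coefficient weights. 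I do not anticipate any obstacle beyond this.
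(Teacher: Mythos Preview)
Your proposal is correct and follows essentially the same route as the paper: use \eref{e:lineflip} to establish the reflection identities \eref{e:asct1}$=$\eref{e:asct4} and \eref{e:asct2}$=$\eref{e:asct3}, then combine with \lref{l:ascsum}. The only cosmetic difference is that for \eref{e:asct2}$=$\eref{e:asct3} the paper works with the original double-sum form $\sum_{0\le i<j\le q}(\cdot)$ (where the reindexing $i\mapsto q-j$, $j\mapsto q-i$ is self-evidently symmetric), whereas your single-sum version via $D_i$ requires the extra identity $\sum_j j(q-j)=\sum_j\binom{j}{2}$; both are equally valid.
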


\begin{proof}
We can substitute \eref{e:lineflip} into \eref{e:asct4} and obtain
\begin{align*}
\eqref{e:asct4} &= \sum_{\ell \in L} \left( \sum_{i = 0}^{q} \binom{i}{2} (q^{2} - (\ell_{i} - i)) \right)\\
&= \sum_{\ell \in L} \left( \sum_{i = 0}^{q} \binom{i}{2} (q^{2} + i - (q^{2} + q - \ell_{q - i}) ) \right)\\
&= \sum_{\ell \in L} \left( \sum_{i = 0}^{q} \binom{i}{2} (\ell_{q - i} - (q - i)  ) \right)\\
&= \sum_{\ell \in L} \left( \sum_{i = 0}^{q} \binom{q - i}{2} (\ell_{i} - i  ) \right)\\
&= \eqref{e:asct1}.
\end{align*}
By \lref{l:ascsum}, \eref{e:asct1} + \eref{e:asct4} = \eref{e:asctot}/2. Therefore, \eref{e:asct1} = \eref{e:asct4} = \eref{e:asctot}/4. We can also substitute \eref{e:lineflip} into \eref{e:asct3} and find
\begin{align*}
\eqref{e:asct3} &= \sum_{\ell \in L} \left( \sum_{0 \leq i < j \leq q} i((\ell_{j} - j) - (\ell_{i} - i)) \right)\\
&= \sum_{\ell \in L} \left( \sum_{0 \leq i < j \leq q} i((q^{2} + q - \ell_{q - j} - j) - (q^{2} + q - \ell_{q - i} - i)) \right)\\
&= \sum_{\ell \in L} \left( \sum_{0 \leq i < j \leq q} i((\ell_{q - i} - (q - i)) -  (\ell_{q - j} - (q - j))) \right)\\
&= \sum_{\ell \in L} \left( \sum_{0 \leq i < j \leq q} (q - j)((\ell_{j} - j) - (\ell_{i} - i)) \right)\\
&= \eqref{e:asct2}.
\end{align*}
Again, by \lref{l:ascsum}, \eref{e:asct2} + \eref{e:asct3} = \eref{e:asctot}/2. Therefore, \eref{e:asct2} = \eref{e:asct3} = \eref{e:asctot}/4.
\end{proof}

We are now ready to prove \tref{t:collineationgroup}

\begin{proof}[Proof of Theorem \ref{t:collineationgroup}]
If $\psi$ is of the form outlined in \lref{l:equalasc}, then under the action of $\Psi$ on $\sym_{r,4}$, and for $i \in \{1, 2, 3, 4\}$ and $s \in T_{i}$, 
\begin{equation*}
\vert \asc(s) \vert = \frac{\eqref{e:asctot}}{4} = \frac{(q^{2} + q + 1)q^{3}(q + 1)(q - 1)}{24}.
\end{equation*}
Now consider $\stab(s)$ and note that $\vert \stab(s) \vert = \vert \textrm{PGL}(3, q) \vert/\vert T_{i} \vert$. There are $(q+1)q(q-1)$ 3-sequences that can be formed from points of a given line $\ell$. There are then $q^{2}$ points not on $\ell$. Therefore,
\begin{align*}
\vert T_{i} \vert = (q^{2} + q + 1)q^{3}(q+1)(q-1).
\end{align*}
Thus,
\begin{equation*}
\vert \stab(s) \vert \vert \asc(s) \vert = \frac{\vert \Psi \vert}{(q^{2} + q + 1)q^{3}(q+1)(q-1)} \frac{(q^{2} + q + 1)q^{3}(q + 1)(q - 1)}{24} = \frac{\vert \Psi \vert}{24}.
\end{equation*}
Thus, by \lref{l:ascstab}, every sequence in $T_{i}$ for $i \in \{1,2,3,4\}$ is covered by $\vert \Psi \vert / 24$ permutations in $\Psi$. By \lref{l:framecover}, we also know that every frame in $\sym_{r,4}$ is covered by $\vert \Psi \vert/24$ permutations in $\Psi$. Moreover, by \tref{t:fundamental}, the number of frames in $\sym_{r,4}$ is exactly 
\begin{equation*}
\vert \textup{PGL}(3,q)\vert = (q^{2} + q + 1)q^{3}(q+1)(q-1)^{2}.
\end{equation*} 
Adding this to the number of sequences in $T_{i}$ for $i \in \{1,2,3,4\}$ we find that the number of sequences in $\sym_{r,4}$ that are covered by exactly $\vert \Psi \vert /24$ permutations in $\Psi$ is at least
\begin{equation*}
(q^{2} + q + 1)q^{3}(q+1)(q-1)(q+3).
\end{equation*}
We divide this number by $\vert \sym_{r,4} \vert$ and conclude 
\begin{align*}
\frac{(q^{2} + q + 1)q^{3}(q+1)(q-1)(q+3)}{\vert \sym_{r,4} \vert} &= \frac{(q^{2} + q + 1)q^{3}(q+1)(q-1)(q+3)}{(q^{2} + q + 1)(q^{2} + q)(q^{2} + q - 1)(q^{2} + q - 2)}\\
&= \frac{q^{2}(q + 3)}{(q^{2} + q - 1)(q + 2)}\\
&> \frac{q}{q+1}.
\end{align*}
This completes the proof.
\end{proof}

\section*{Acknowledgements}

The author is grateful to Daniel Horsley, Ian Wanless, Jonathan Jedwab, Shuxing Li and Jingzhou Na for useful discussions. The author was supported by an Australian Government Research Training Program (RTP) Scholarship.

\end{document}